\newcommand{\dsum}{\displaystyle\sum}
\newtheorem{prop}{Proposition}[section]
\newtheorem{rmk}{Remark}[section]
\let\origmaketitle\maketitle
\def\maketitle{
  \begingroup
  \def\uppercasenonmath##1{} % this disables uppercasing title
  \let\MakeUppercase\relax % this disables uppercasing authors
  \origmaketitle
  \endgroup
}
\begin{document}

\title{\Large Hub Location with Protection under Link Failures}

\author[V. Blanco, E. Fern\'andez \MakeLowercase{and} Y. Hinojosa]{{\large V\'ictor Blanco$^\dagger$, Elena Fern\'andez$^\ddagger$ and  Yolanda Hinojosa$^\star$}\medskip\\
$^\dagger$Institute of Mathematics (IMAG), Universidad de Granada\\
$^\ddagger$ Dept. of Statistics \& OR, Universidad de C\'adiz\\
$^\star$Institute of Mathematics (IMUS), Universidad de Sevilla}

\address{IMAG, Universidad de Granada, SPAIN.}
\email{vblanco@ugr.es}

\address{Dpt. Stats \& OR, Universidad de C\'adiz, SPAIN.}
\email{elena.fernandez@uca.es}

\address{IMUS, Universidad de Sevilla, SPAIN.}
\email{yhinojos@us.es}

\date{}

\maketitle

\begin{abstract}
This paper introduces the Hub Location Problem under Link Failures, a hub location problem in which activated inter-hub links may fail with a given probability.
 Two different optimization models are studied, which construct hub backbone networks protected under hub links disruptions by imposing that for each commodity an additional routing path exists besides its original routing path.
Both models consider the minimization of the set-up costs of the activated hubs and inter-hub links plus the expected value of the routing costs of the original and alternative paths. The first model builds explicitly the alternative routing paths, while the second model guarantees that for each commodity at least one alternative path exists using a large set of connectivity constraints, although the alternative paths are not built explicitly. The results of extensive computational testing allow to analyze the performance of the two proposed models and to evaluate the extra cost required to design a robust backbone network under hub links failures. The obtained results support the validity of the proposal.
\keywords{Hub Location; Integer Programming; Robust Network Design; Disruptions}
\end{abstract}

\section{Introduction\label{sect1}}

Hub location lies in the intersection of Location Analysis and Network Design, and produces challenging optimization problems with multiple applications, mostly in the fields of distribution/logistics (parcel delivery, air transportation, etc.) and telecommunications {\citep{farahani2013hub}}. The increasing attention they have received in the last decades is thus not surprising \citep[see e.g.][]{Campbell25,ContrerasOkelly}.
One of the current trends in hub location is the search of models suitable for dealing with different sources of uncertainty \citep{Alumur}. While some models in the literature consider uncertainty in demand \citep{Contreras1,Contreras2}, other models are concerned with the robustness of solution {hub} networks, by associating uncertainty with the possibility (probability) of disruption of the {involved} elements of {the} solution networks and looking for solutions that are \emph{robust} under disruptions.

Some works have studied models in which it is assumed that activated hub nodes may (totally or partially) fail with a certain probability. \cite{An} propose a model in which two backup hub nodes are determined for each commodity. \cite{Rostami} assume that a finite set of hub breakdown scenarios is known and provide a two-stage formulation for the single allocation hub location problem with possible hub breakdown. \cite{cui2010reliable} provide a stochastic model to determine a subset of the activated hub nodes of given size through which each commodity can be routed, in such a way that if the cheapest route fails, the commodity can be routed through the second cheapest, and so on, or through an emergency facility. The authors provide a Mixed Integer Linear Programming (MILP) formulation for the problem as well as an approximate  Lagrangean relaxation scheme for its resolution based on the ideas in \citep{snyder2005reliability} for the $p$-median problem. The planar version of this model is also analyzed there.\\
\cite{kim2009reliable} propose the reliable $p$-hub location  problem  (PHMR) and the $p$-hub mandatory dispersion (PHMD). In the PHMR the goal is to determine the location of $p$ nodes based on the level of reliability to maximize the completed flows among the set of nodes. The PHMD imposes a certain minimum separation between the $p$ selected hub nodes, and maximizes the reliability of the network.  MILP formulations and heuristic approaches are provided for both problems both in {the} single and {the} multiple-allocation {framework}. Reliability of hub backbone networks has been also studied in \citep{zeng2010reliable,korani2021bi,LiLiShuSongZhang_2021IJOC}.\\
\cite{Parvaresh:2013wc} consider the multiple allocation $p$-hub median problem under intentional disruptions in which the goal is to identify the optimal strategy for the location of $p$ hub nodes by minimizing the expected transportation cost in case the worst-case disruptions is minimized. A bilevel mixed integer formulation is provided as well as a simulated annealing heuristic for its resolution.

The problem of designing robust networks under edges/arcs failures  has also been  studied in the literature under different settings. \cite{aneja2001maximizing} study the single-commodity maximum flow problem for the case when edge failures may occur by means of the maximal residual flow problem, whose goal is to determine the maximal flow in which the largest arc flow is as small as possible. This problem is closely related to the network interdiction problem that consists of determining a certain number of arcs whose removal from the network minimizes the maximum amount of flow that one can send through the network \citep[see e.g.][]{altner2010maximum,cormican1998stochastic,royset2007solving,wood1993deterministic}.
\cite{ma2016minimum} propose the Conditional Value-at-Risk
Constrained Minimum Spanning $k$-Core Problem where the possibility of edge disruptions is prevented when trying to construct minimum cost subgraphs of a network with a minimum number of incident edges at each node.
\cite{andreas2008mathematical} study shortest path problems under link failures by imposing that the probability that all arcs {that} successfully operate in at least one path {greater than certain} threshold value.

However, existing works dealing with potential failure of inter-hub links is very scarce \citep{Mohammadi}. This is precisely the focus of this work, where we introduce the Hub Location Problem under Link Failures (HLPLF), a hub location problem in which activated inter-hub links may fail with a given probability. This can be very useful in typical hub location applications in which the total failure of a hub is highly unlikely, whereas partial failures occur only affecting some of the links incident with the hubs (certain air connections, train lines, etc.) We point out that by protecting inter-hub links under failure we also partially protect hub nodes under {failures}.

For dealing with the HLPLF we propose two alternative models, which guarantee that solution networks are protected under disruption of inter-hub links, in the sense that for each commodity at least one alternative (backup) routing path exists. The main difference between the two models is how backup paths are enforced.

In both cases we consider set-up costs  for both activated hubs and activated inter-hub edges. Thus, we do not fix the number of hubs to activate, {allowing} for incomplete backbone networks. As usual, the routing costs apply a discount factor $\alpha$ to inter-hub arcs. We assume multiple allocation of nodes to activated hubs, although the allocation may be different in the original and backup paths. We further impose that the original routing path of each commodity $r=(o_r, d_r)$ contains exactly one inter-hub arc, which can be a loop.  That is, the original routing path is of the form $(o_r, k, m, d_r)$, where $k$ and $m$ are activated hubs, and $(k, m)$  is an inter-hub arc, which reduces to a loop when $k=m$.  {We are also given the failure probabilities for each potential inter-hub edge. Then, t}he objective is to minimize the sum of the set-up costs of the activated hubs and inter-hub edges, plus the expected routing costs.

Our models can be seen as two-stage stochastic programming models in which the \emph{a priori} solution is determined by the strategic decisions associated with the selection of activated {hub nodes} and inter-hub edges together with an \emph{original plan} given by a set of feasible routing paths, one for each commodity, whereas the recourse action determines a \emph{backup plan}, given by a set of alternative routing paths for the commodities, that can be used in case the inter-hub {edge} of the original plan fails.
As already mentioned, the models that we propose differ on the way backup paths are constructed. The first model imposes that the alternative routing path of each commodity contains exactly one inter-hub arc {(as in the original plan)}, which can be a loop, and builds it explicitly. The second model is more flexible, in the sense that it allows {for} arbitrarily large sequences of inter-hub arcs {to be} used in the alternative routing paths, although such paths are not built explicitly. This is achieved with a set of exponentially many (on the number of nodes of the network) constraints, by imposing that the cut-set of the backbone network contains at least $\lambda$ edges, for a given integer value of $\lambda \geq 2$. We study some properties of both models and propose a MILP in each case. For the second model, since it has exponentially many constraints, we also propose a branch-and-cut solution algorithm.

 Extensive computational experiments have been carried out on a large set of benchmark instances based on the well-known CAB~\citep{OKelly_EJOR87}, AP~\citep{EK96}, and {TR~\citep{tan2007hub}} datasets, for varying settings of the failure probabilities and other cost parameters. The obtained results are summarized and analyzed, comparing the computational performance of each of the models and the effect of the different parameters.
 Managerial insights are derived from the analysis of the characteristics of the solutions produced by each of the models and their comparison. {In particular, we analyze the distribution of the costs among the different elements considered (hubs and links set-up costs and routing costs), the number of activated hub and links, and the density of the obtained backbone networks}.
 Finally, an empirical analysis of the two proposed models has been carried out. We compare solutions obtained with the different models in terms of efficiency and robustness. For this analysis, multiple failure scenarios have been generated from optimal solutions to the underlying deterministic hub location model and their \emph{a posteriori} {capability to re-route the commodities}, tested against that of the proposed models. The obtained results assess the validity of the proposal.

The remainder of this paper is structured as follows. In Section \ref{sect1.1} we introduce the notation that we will use and formally define the HLPLF from a general perspective. Section \ref{sect:unrestricted} is devoted to the first HLPLF model that we study in which it is assumed that the alternative paths for the commodities contain exactly one inter-hub arc. We study some of its properties and propose a MILP formulation for it. The model in which we impose that the backbone network is $\lambda$-connected is studied in Section \ref{sect:unrestricted2} where we also present a MILP formulation for it. Section \ref{sec:comput} describes the computational experiments we have carried out and summarizes the obtained results.
Some managerial insights from the analysis of the structure of the solution networks produced by each of the models are also derived in this section.
Finally,  Section \ref{sec:comput_simul} describes the empirical analysis that has been carried out in which multiple failure scenarios have been generated from optimal solutions to the underlying deterministic hub location model and their \emph{a posteriori} capability of re-routing the commodities tested against that obtained with the proposed models. The paper closes in Section \ref{sec:conclu} with some conclusions.

\section{Notation and definition of the problem}\label{sect1.1}

Consider a graph $N=(V, E)$, where the node set $V=\{1, 2,\dots, n\}$ represents a given set of users and the edge set $E$ the existing connections between pairs of users.  We assume that $N$ is a complete graph and that $E$ contains loops, i.e. for all $i\in V$, edge $\{i,i\}\in E$.
We further assume that potential locations for hubs are placed at nodes of the graph and the set of potential locations coincides with $V$. For each potential location $k\in V$,  we denote by $f_k$ the set-up cost for activating a hub at node $k$.
Any pair of hub nodes can be connected by means of an \emph{inter-hub} edge, provided that both endnodes $k$ and $l$ are activated as hub nodes as well.

The set $E$ will be referred to as {the} set of potential inter-hub edges or just as set of potential hub edges. Activated hub edges  incur set-up costs as well; let  {$h_{kl}\geq0$} be the set-up cost for activating hub edge $\{k, l\}\in E$. A set of activated hubs will be denoted by $H \subseteq V$, and a set of activated hub edges for $H$ by $E_H \subseteq E[H]$, where $E[H]$ is the set of edges with both endnodes in $H$.
   Note that the assumption that $N$ is a complete graph implies no loss of generality, since ($i$) arbitrarily large set-up costs can be associated with nodes that are not potential hubs; and, ($ii$) arbitrarily large activation costs can be associated with non-existing hub edges.
Activating a hub edge allows to send flows through it in either direction. Let $A=\{(i,j) \cup (j,i): \{i, j\}\in E, \, i,j\in V\}$ be the arc set. Arcs in the form $(i,i)$ will also be called \emph{loops} and we will use $A_H\subseteq A$ to denote the set of inter-hub arcs induced by $E_H$.

Service demand is given by a set of commodities defined over pairs of users, indexed in a set $R$. Let $\mathcal{D}=\{(o_r, d_r, w_r): r\in R\}$ denote the set of commodities, where the triplet $(o_r, d_r, w_r)$ indicates that an amount of flow $w_r\ge 0$ must be routed  from origin $o_r\in V$ to destination $d_r\in V$. The origin/destination pair associated with a given commodity will also be referred to as its OD pair. Commodities must be routed via paths of the form $\pi=(o_r, k_1, \ldots, k_{s}, d_r)$ with $k_i\in H$, $1\le i\le s$. Similarly to most Hub Location Problems (HLPs), a routing path $\pi=(o_r, k_1, \ldots, k_{s}, d_r)$ is \emph{feasible} if $(1)$ it includes at least one hub node, i.e. $s\geq 1$, and $(2)$ the underlying edges of all traversed arcs other than the access and delivery arcs, $(o_r, k_1)$ and $(k_{s}, d_r)$, respectively, are activated inter-hub edges, i.e., $\{k_i, k_{i+1}\}\in E_H$, $1\le i\le s-1$.
Note that this implies that in any feasible path all intermediate nodes are activated as hubs as well, i.e. $k_i\in H$, $1\leq i\leq s$. In the following the set of feasible paths for a given commodity $r\in R$  will be denoted by $\Pi_{(H,E_H)}(r)$.

Most HLPs studied in the literature do not consider loops explicitly. Then, only \emph{proper} arcs $(k, l)$ with $k\ne l$ can be considered as hub arcs. In this work we follow a slightly more general setting in which loops of the form $(k, k)$ can also be considered as hub arcs. Then, if a loop $(k, k)$ is used in a routing path, it is required not only that $k$ is activated as a hub node, but also that the loop $\{k, k\}$ is activated as a hub edge as well.

Throughout we assume multiple allocation of commodities to open hubs. That is, it is possible that two commodities with the same origin are routed using a different  access hub. Routing flows through the arcs of a hub-and-spoke network incurs different types of costs. These costs, which may depend on the type of arc, account for transportation costs as well as for some additional collection/handling/distribution costs at the endnodes of the arcs. As usual in the literature, we assume that transportation costs of flows routed through inter-hub arcs are subjected to a discount factor $0\leq \alpha \leq 1$. In this work we will denote by $c_{ij}\geq 0$ the  unit routing cost  through inter-hub arc $(i,j)\in A_H$, which includes discounted transportation costs and handling costs and we will denote by $\bar{c}_{ij}\geq 0$ the unit routing   cost for an access or a delivery arc $(i,j)\in A\setminus A_H$, which could also incorporate  different discounted access or delivery costs.

Thus, with the above notation, the routing cost of commodity $r\in R$  through a feasible path $\pi=(o_r, k_1, \ldots, k_{s}, d_r)\in \Pi_{(H,E_H)}(r)$ is:
$$
C^r_{\pi}=w_r\left(\overline c_{o_rk_1}+\sum_{i=1}^{s-1}c_{k_ik_{i+1}} + \overline c_{k_{s}d_r}\right),
$$
where the first and last addends correspond to the access and delivery arcs, respectively, and the intermediate ones are the service costs through the backbone network  $(H,E_{H})$.

Broadly speaking, under the above assumptions, the goal of a HLP is to decide the location of the hub nodes $H$ and to select a suitable subset of hub edges $E_H$, to  \textit{optimally} route the commodities through the backbone network $(H,E_{H})$ induced by the activated hub nodes and hub edges, so as to minimize the sum of the overall set-up costs for activating hub nodes and hub links, plus the {commodities} routing costs.
With the above notation, this  problem can be stated as:
\begin{align}\label{p0}\tag{\rm HLP}
\min_{H \subseteq V, E_H\subseteq E[H]} \sum_{k \in H} f_k + \sum_{e \in E_H} h_{e}+ \sum_{r \in R}\min_{\pi \in \Pi_{(H,E_H)}(r)} C^r_{\pi}.
\end{align}
Most HLPs studied in the literature (that do not consider loops explicitly) restrict the set of potential paths for routing the commodities to those using at most one hub arc. When loops are also considered as potential hub arcs as we do, the analogous set of potential paths for routing the commodities is restricted to those containing exactly one hub arc. Thus, for $H$ and $E_{H}$ given, the set of potential paths for routing {commodity $r\in R$} is given by $\Pi_{(H,E_H)}(r) =\{\pi=(o_r, k, l, d_r): \, k, l \in H, \, \{k,l\}\in E_{H}\}$. In such a case the routing cost of commodity $r$ through path $\pi=(o_r,k,l,d_r)$  reduces to
$$
C_{kl}^r := C^r_{\pi}=w_r(\overline c_{o_rk}+ c_{kl}+ \overline c_{ld_r}),
$$
\noindent and the HLP simplifies to:
\begin{align}\label{p0_1}\tag{\rm HLP$^1$}
\min_{H \subseteq V, E_{H}\subseteq E[H]} \sum_{k \in H} f_k + \sum_{e \in E_{H}} h_{e} + \sum_{r \in R}\min_{(k,l) \in A_{H}} C^r_{k l}.
\end{align}

{Most hub networks are sensitive to failures in their links, being the impact in some of them particularly harmful for the users. Examples of potential applications of the models that we study include  the management of airlines and airport industries~\citep{campbell2005a}, in which breakdowns in certain flight connections may occur, and passengers are directly affected. Also, in rapid delivery packing systems~\citep{ccetiner2010hubbing}, where the users pay for fast services and failures in the hub network cause large delays.}

 In the remainder of this work we consider HLPs in which the (\emph{original}) routing paths consist of exactly one hub arc (possibly a loop), and we assume that activated hub edges in $E_{H}$ may fail. In case hub edge $\{k,l\}\in E_{H}$ fails, then the inter-hub arcs $(k, l), (l, k)\in A_H$ can no longer be used for routing the commodities. In order to protect solution networks from failure we follow a policy that does not alter the strategic decisions on the activated hubs and inter-hub edges and focuses solely on the operational decisions concerning the re-routing of affected commodities.
 Accordingly,  we impose that, for each commodity, the backbone network $(H,E_{H})$ contains, in addition to the original routing path, some substitute path connecting $o_r$ and $d_r$. Such a substitute path will be referred to as \emph{backup} or \emph{alternative} path.

For each edge $e=\{k,l\} \in E$, let $X_{kl}$ denote the random variable modeling whether $e$ fails. We assume that the random {variable $X_{kl}$ follows a  Bernoulli distribution with probability $p_{kl}$, for each }$\{k,l\}\in E$. When $k\ne l$ failure of edge $e$ arises not only when the link $\{k,l\}$ can no longer be used, but also when, for any reason, the collection and redistribution services at any endnode of edge $\{k,l\}$  cannot be carried out. Thus $p_{kl}$ represents the probability that any of these events happen.
In case edge $e$ is a loop, i.e., $e=\{k,k\}$, with $k\in H$, then $p_{kk}$ represents the probability that the handling process carried out when $k$ is used as the unique intermediate hub fails.

Observe that when edges may fail with a given probability, the costs of feasible routing paths are also random variables, which will be denoted by $\mathcal{C}_r$, $r\in R$. Furthermore, the probability distribution of $\mathcal{C}_r$, $r\in R$, is dictated by the failure probability distribution of the involved inter-hub edges. In particular, when $\pi_0 = (o_r,k,l,d_r)$ is the original routing path of a given commodity $r\in R$, the expected routing cost of commodity $r$ can be calculated as:
$$
E[\mathcal{C}_r| \pi_0] = (1-p_{kl}) C^r_{kl} + p_{kl} C^r_{\overline \pi_0}.
$$
where $\overline \pi_0 \in \Pi_{(H,E_H)}(r) \backslash\{\pi_0\}$ is the backup path  in case $\{k,l\}$ fails.

In the following we deal with the problem of finding hub networks protected against  {inter-hub edge} failures under {the} above assumptions.
For this, in the objective function, instead of considering the costs of the routing paths, we will consider their expected routing cost. That is, the HLPLF can be stated as:
\begin{align}\label{p1}\tag{HLPLF}
\min_{H \subseteq V, E_H\subseteq E[H]} \sum_{k \in H} f_k + \sum_{e \in E_H} h_{e}+ \sum_{r\in R} \min_{\pi_0\in\Pi_{(H,E_H)}(r)} E[\mathcal{C}_r| \pi_0].
\end{align}
Indeed, multiple alternatives fall within the above generic framework which differ from each other in how the alternative routing paths are obtained. In the following sections we propose two alternative models for determining such backup paths, based on different assumptions, and provide mathematical programming formulations for each of them.

\section{HLPLF with single inter-hub arc backup paths}\label{sect:unrestricted}

The \ref{p1} that we address in this section enforces that the alternative paths for routing the commodities have the same structure as the original ones. That is, we assume that backup paths contain at least one hub and have exactly one inter-hub arc {(possibly a loop)}. This avoids having to use many transshipment points in case of failure. This model will be referred to as (HLPLF-1BP).

Given a commodity $r\in R$, the backbone network $(H,E_{H})$ and the original routing path $\pi_0 = (o_r,k,l,d_r)$, we assume that the backup path is in the form $\overline \pi_0 = (o_r,\bar k,\bar l,d_r)$,
with  $\{k, l\}\neq\{\bar k, \bar l\}$.

Thus, the expected routing cost of commodity $r$ is:
$$
E[\mathcal{C}_r|\pi_0] = (1-p_{kl}) C^r_{kl} + p_{kl} C^r_{\bar k \bar l}.
$$

Figure \ref{fig00} illustrates the different situations that may arise in case a hub link fails. Figure \ref{fig0:0} shows a backbone network with four hub nodes ($k$, $l$, $m$ and $q$) and five hub links, two of them corresponding to loops, $\{m,m\}$ and $\{l,l\}$, and rest ones corresponding to edges $\{k,q\}$, $\{k,l\}$ and $\{q,m\}$. The figure also depicts the origin ($o_r$) and destination ($d_r$) of a given commodity $r\in R$ and a possible path for this commodity through hub arc $(k,l)$.  We assume that it is the \emph{original path} for the commodity $r\in R$. Access/distribution arcs are depicted as dashed lines and hub edges as solid lines.

Figures \ref{fig0:a} and \ref{fig0:b} show different single inter-hub arc backup paths for  commodity $r\in R$ in case the original one fails. In Figure \ref{fig0:a}, the backup path uses hub-arc $(q,m)$ to re-route the commodity, while in Figure \ref{fig0:b}  the backup path uses loop arc $(l,l)$.

%\begin{figure}[H]
% \begin{center}
%%\includegraphics[width=0.6\textwidth]{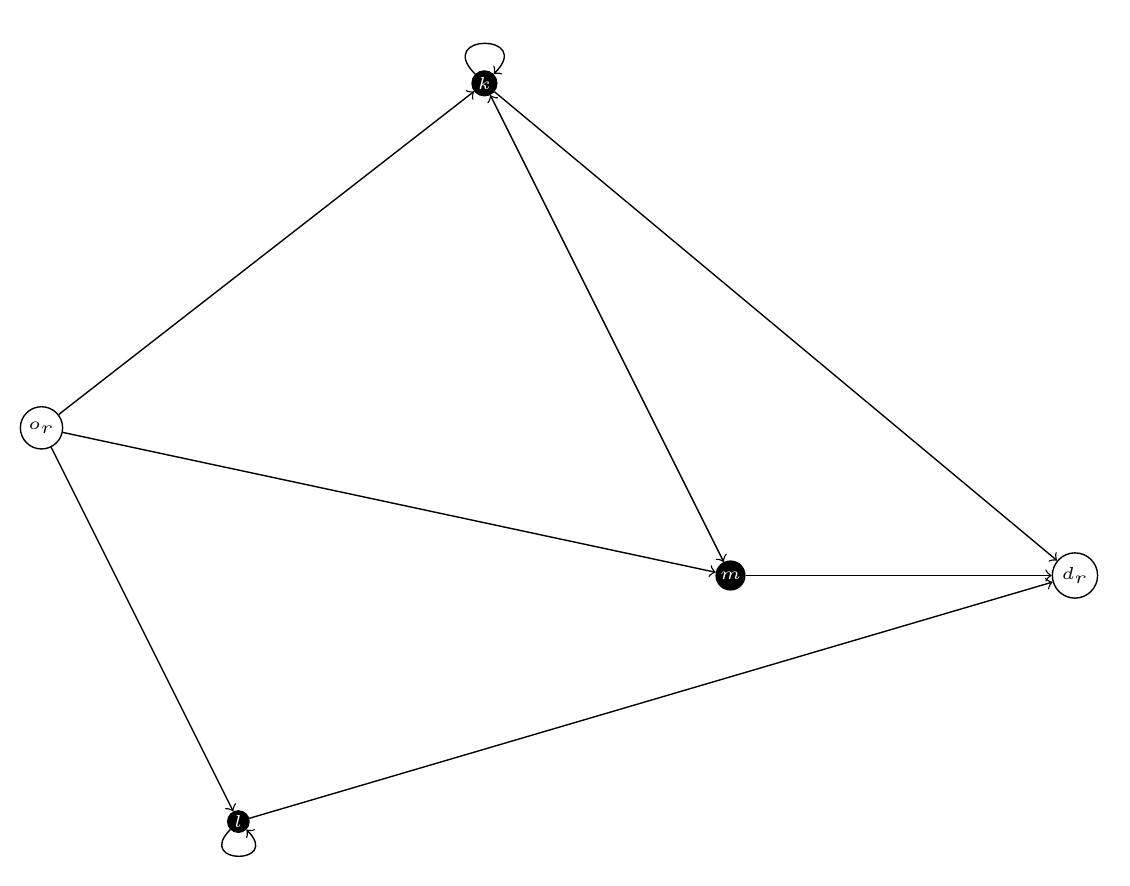}
%\includegraphics[width=0.5\textwidth]{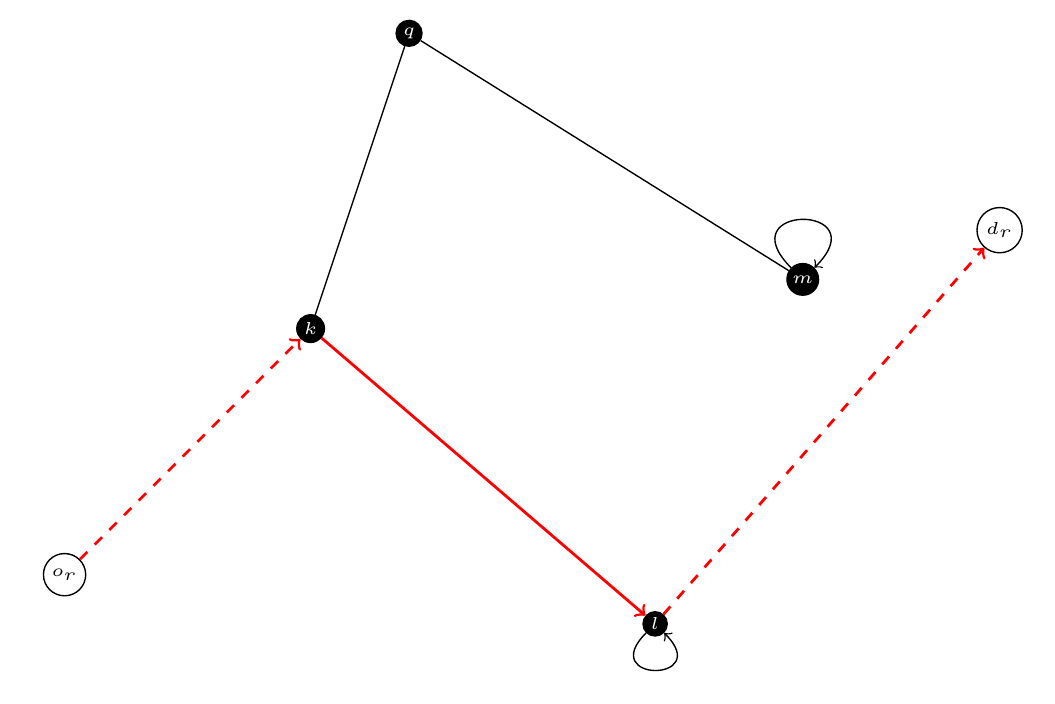}
%\end{center}
%%\caption{Illustration of a backbone network to route a commodity $r=(o_r=i,d_r=j,w_r)$.\label{fig:0}}
%\caption{Illustration of a network with four nodes and one commodity $r=(o_r,d_r,w_r)$. The backbone network has five inter-hub links, two of them loops ($\{l,l\}$ and $\{m,m\}$).\label{fig:0}}
%\end{figure}

\begin{figure}[h]
     \centering
     \begin{subfigure}[b]{0.31\textwidth}
         \centering
         \includegraphics[width=\textwidth]{fig3a}
         \caption{\scriptsize Original path via $(q,\ell)$. \label{fig0:0}}
     \end{subfigure}\quad
     \begin{subfigure}[b]{0.31\textwidth}
         \centering
         \includegraphics[width=\textwidth]{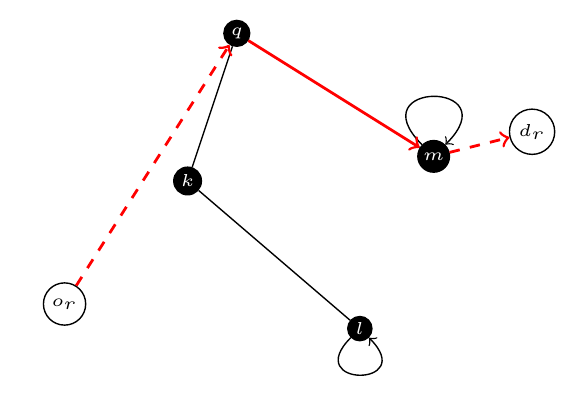}
         \caption{\scriptsize Backup path via $(q,m)$. \label{fig0:a}}
     \end{subfigure}\quad
     \begin{subfigure}[b]{0.31\textwidth}
         \centering
         \includegraphics[width=\textwidth]{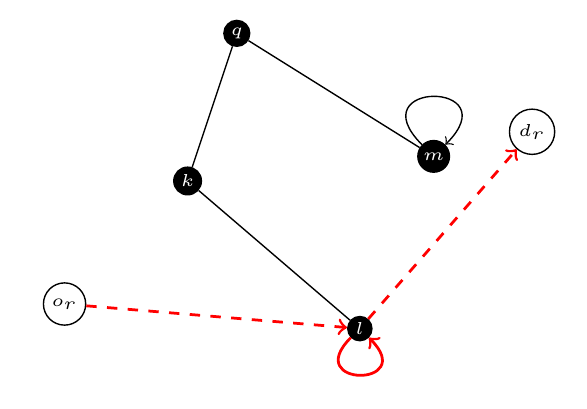}
         \caption{\scriptsize Backup path via $(l,l)$. \label{fig0:b}}
     \end{subfigure}
     %     \begin{subfigure}[b]{0.32\textwidth}
%         \centering
%         \includegraphics[width=\textwidth]{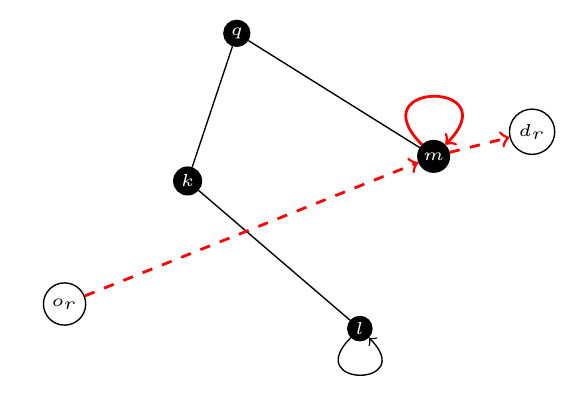}
%         \caption{Backup path via hub arc $(q,q)$. \label{fig0:c}}
%     \end{subfigure}
     \caption{A network with four nodes and one commodity $r=(o_r,d_r,w_r)$: Original path, $\pi_0 = (o_r,k,l,d_r)$, and different backup paths in case the original hub arc $(k,l)$ fails. \label{fig00}}
    \end{figure}

Next we develop a mathematical programming formulation for the above problem, first introducing the decision variables.

We  use the following variables associated with the design decisions on the elements of the network that are activated, hubs and edges:
$$
z_k = \left\{\begin{array}{cl}
1 & \mbox{if a hub is opened at the potential hub node $k$,}\\
0 & \mbox{otherwise}
\end{array}\right. \quad \text{ for $k \in V$.}
$$
$$
y_{kl} = \left\{\begin{array}{cl}
1 & \mbox{if hub edge $\{k, l\}$ is activated,}\\
0 & \mbox{otherwise}
\end{array}\right. \quad \text{ for $\{k,l\} \in E$.}
$$

The formulation uses two additional sets of variables, which respectively represent the original and alternative routing path for each commodity. In particular, for $r\in R$ {and} $(k,l) \in A$:
$$
x^r_{kl} = \left\{\begin{array}{cl}
1 & \mbox{ if the original routing path for commodity $r$ is $(o_r,k,l,d_r)$}\\
0 & \mbox{otherwise,}
\end{array}\right.
$$

$$
\bar x^r_{kl} = \left\{\begin{array}{cl}
1 & \mbox{  if the alternative path for commodity $r\in R$ is $(o_r,k,l,d_r)$,}\\
0 & \mbox{otherwise.}
\end{array}\right.
$$

 With these sets of decision variables the expected routing cost of commodity $r\in R$ can be expressed as:
 
$$
\sum_{(k,l)\in A} x_{kl}^r \Big[C_{kl}^r (1-p_{kl}) +  p_{kl}  \sum_{(k^\prime, l^\prime)\in A \setminus\{(k, l)\cup (l,k)\}} C_{k^\prime l^\prime}^r\bar x^r_{k^\prime \, l^\prime}\Big]
$$

\noindent where the two addends in each term of the above expression correspond to the expected routing cost of the original and backup plan of commodity $r$, respectively, both of which only apply if, in the original plan, the commodity is routed through the inter-hub edge corresponding to the term. In particular,
the first addend gives the overall routing cost for commodity $r$ in case the arc of the backbone network used for routing $r$ in the original plan does not fail (multiplied by the probability of not failing). The second term computes the cost of the alternative routing path, multiplied by the probability of failure of the inter-hub edge of the original plan. Observe that in case $(k,l)$ is the arc used initially by {commodity $r\in R$} and $(k^\prime, l^\prime)$ is the backup arc {for $r$}, one obtains the cost $(1-p_{kl}) C^r_{kl} + p_{kl} C^r_{k^\prime l^\prime}$.

Rearranging terms in the above expression one can rewrite the overall  routing cost function for commodity $r$ as:

$$
\sum_{(k,l)\in A}  C_{kl}^r \Big[(1-p_{kl}) x_{kl}^r + \bar x_{kl}^r \sum_{(k^\prime,l^\prime)\in A \setminus\{(k, l)\cup (l,k)\}} p_{k^\prime\, l^\prime} x^r_{k^\prime \, l^\prime}\Big],$$

\noindent where it can be observed that the impact of a given arc $(k,l)\in A$ in the routing cost of commodity $r$ is either $0$ (if it is not used neither in the original nor the alternative path); $(1-p_{kl}) C_{kl}^r$ if it is used in the original path;  or {$C_{kl}^r p_{k^\prime,l^\prime}$} in case arc $(k,l)$ is used in the alternative path and arc ($k^\prime,l^\prime)$ in the original one.

The above decision variables together with this routing cost function lead to the following Integer Nonlinear Programming formulation for  (HLPLF-1BP):

\begin{subequations}
    \makeatletter
        \def\@currentlabel{${\rm HLPLF-1BP}$}
        \makeatother
       \label{HLPLF-1BP}
        \renewcommand{\theequation}{$1.{\arabic{equation}}$}
\begin{align}
\min & \sum_{k \in V} f_k z_k + \sum_{\{k,l\} \in E} h_{kl} y_{kl} &+&\sum_{(k,l)\in A}  C_{kl}^r \Big[(1-p_{kl}) x_{kl}^r + \bar x_{kl}^r \sum_{(k^\prime,l^\prime)\in A \setminus\{(k, l)\cup (l,k)\}} p_{k^\prime\, l^\prime} x^r_{k^\prime \, l^\prime}\Big]\nonumber\\
 \mbox{s.t. }
& \sum_{(k, l)\in A}x^{r}_{kl} =1\, &&  \forall     {r\in R}\label{model1:1}\\
& \sum_{(k, l)\in A} \bar x^{r}_{kl} =1\, &&  \forall     {r\in R}\label{model1:2}\\
&x^{r}_{kl}+x^{r}_{lk}+\bar x^{r}_{kl}+\bar x^{r}_{lk}\le y_{kl}\, \qquad && \forall    r\in R, \{k, l\}\in E, k\ne l\label{model1:3}\\
&x^{r}_{kk}+\bar x^{r}_{kk}\leq y_{kk}\,\qquad  && \forall  r\in R, \{k, k\}\in E \label{model1:4}\\
&y_{kl}\leq z_k\,\qquad && \forall \{k, l\}\in E\label{model1:5}\\
&y_{kl}\leq z_l\,\qquad  && \forall  \{k, l\}\in E, k\ne l\label{model1:6}\\
&x^r_{kk}+\sum_{\substack{l\in V\\l\ne k}} (x^r_{kl}+x^r_{lk})\le z_k  && \forall r, \forall k\in V\label{model1:7}\\
&\bar x^r_{kk}+\sum_{\substack{l\in V\\l\ne k}} (\bar x^r_{kl}+\bar x^r_{lk})\le z_k  &&\forall r, \forall k\in V\label{model1:8}\\
& x^{r}_{kl}, \bar x^{r}_{kl}\in\{0, 1\} && \forall r\in R, (k, l)\in A\label{int_x}\\
& z_k\in\{0, 1\}         &&    \forall  k\in V\label{int_z}\\
& y_{kl}\in\{0, 1\}        &&    \forall  \{k, l\}\in E.\label{int_y}
\end{align}
\end{subequations}
where constraints \eqref{model1:1} and \eqref{model1:2} enforce that each commodity uses exactly one inter-hub arc both in the original and the backup path.  Constraints \eqref{model1:3} and \eqref{model1:4} impose that the original and the backup path do not coincide. These constraints also guarantee that any used inter-hub edge is activated. Constraints \eqref{model1:5} and \eqref{model1:6} ensure that any endnode of an activated inter-hub edge must be activated as a hub. Constraints \eqref{model1:7} and \eqref{model1:8} are valid inequalities, already proposed in \cite{MARIN2006274}, which reinforce the relationship between the routing variables and the hub activation variables. Finally, \eqref{int_x}--\eqref{int_y} are the domains of the decision variables.

\subsection{Linearization of the objective function}

The reader may have observed the non-linearity of the objective function term corresponding to the expected routing cost. As we explain below this term can be suitably linearized by introducing a new auxiliary variable $P^r_{kl}\in \mathbb{R}_+$ associated with each commodity $r\in R$ and each arc $(k, l)\in A$.

Let $P^r_{kl}= \bar x^r_{kl} \sum_{(k^\prime,l^\prime)\in A \setminus\{(k, l)\cup (l,k)\}} p_{k^\prime l^\prime} x^r_{k^\prime l^\prime}$ denote the probability of using inter-hub arc $(k, l)$ in the alternative path of commodity $r$.  Observe that because of the minimization criterion, the nonnegativity of the routing costs, and constraints \eqref{model1:1}-\eqref{model1:4}, the value of $P_{kl}^r$ can be determined by the following set of constraints:
%$$
\begin{align}
P^r_{kl}  &\geq  \sum_{(k^\prime,\,l^\prime)\in A} p_{k^\prime l^\prime} x^r_{k^\prime l^\prime} + (\bar x^r_{kl} -1), && \forall r\in R,\; \forall\, (k,l)\in A, \label{Pkl}\tag{$1.{12}$}\\
 P_{kl}^r & \geq  0, &&\forall r\in R, \; \forall\, (k, l)\in A,\label{Const_last}\tag{$1.{13}$}
\end{align}
and the objective function can be rewritten as:
\begin{align*}
\sum_{k\in V}f_kz_k+ \sum_{\{k,l\}\in E}h_{kl}y_{kl}+ \sum_{r\in R}\sum_{(k,l)\in A}  C^r_{kl}  \Big((1-p_{kl})x^{r}_{kl} + P_{kl}^r\Big).%\label{OF:lin}
\end{align*}
We can also incorporate the following valid inequalities to reinforce our formulation:
\begin{align}
\sum_{(k,l)\in A} P^r_{kl}\leq {\max_{\{k,l\}\in E} p_{kl}}, && \forall r\in R\label{dv_Pkl}.\tag{$1.{14}$}
\end{align}
Therefore, we have the following MILP formulation for the problem:
\begin{align}\label{M1}\tag{HLPLF-1BP}
\min & \sum_{k\in V}f_kz_k+ \sum_{\{k,l\}\in E}h_{kl}y_{kl} + \sum_{r\in R}\sum_{(k,l)\in A}  C^r_{kl}  \Big((1-p_{kl})x^{r}_{kl} + P_{kl}^r\Big)\\
  \mbox{s.t. }&  \eqref{model1:1}-\eqref{dv_Pkl}.\nonumber\nonumber%\label{Const_last}
\end{align}

Below we state some simple optimality conditions that can be used to reduce the set of decision variables.
\begin{prop}
There is an optimal solution to \eqref{M1} such that  $x^r_{lk}=\bar x^r_{lk}=P^r_{lk} = 0$ for all $r\in R$, $(l,k)\in A$, with $C^r_{kl}\leq C^r_{lk}$.
\end{prop}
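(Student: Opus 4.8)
The plan is to prove the statement by an exchange (reorientation) argument: starting from any optimal solution of \eqref{M1}, I reorient every arc it uses toward its cheaper direction and show that this neither destroys feasibility nor increases the objective. Two structural facts make the argument work. First, for a proper edge $\{k,l\}$ the two antiparallel arcs $(k,l)$ and $(l,k)$ carry the same failure probability, $p_{kl}=p_{lk}$, because the Bernoulli parameter is attached to the edge, not to an orientation. Second, for every commodity $r$ constraints \eqref{model1:3}--\eqref{model1:4} force $x^r_{kl}+x^r_{lk}+\bar x^r_{kl}+\bar x^r_{lk}\le y_{kl}\le 1$ (and the loop analogue $x^r_{kk}+\bar x^r_{kk}\le y_{kk}\le 1$), so in any feasible solution the original arc and the backup arc of $r$ always lie on two distinct edges. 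Hence an orientation chosen independently on each edge can never create a conflict between the original and the backup path.

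I would then fix, for each unordered pair $\{k,l\}$ with $k\ne l$, the orientation $\sigma(k,l)$ attaining $\min\{C^r_{kl},C^r_{lk}\}$, breaking ties by a fixed rule (say the lexicographic order of the endpoints) so that exactly one of the two antiparallel arcs is retained; loops $(k,k)$ are left untouched. Given an optimal solution whose original and backup arcs for $r$ are $(a,b)$ and $(c,d)$, I define a new solution with the same $z$ and $y$ in which the original arc is $\sigma(a,b)$ and the backup arc is $\sigma(c,d)$. Feasibility is immediate: \eqref{model1:1}--\eqref{model1:2} still hold with one arc each; the left-hand sides of \eqref{model1:3}--\eqref{model1:8} depend only on which edges and nodes are traversed, which reorientation leaves unchanged (e.g. in \eqref{model1:7} flipping $(a,b)$ to $(b,a)$ moves a term from the $x^r_{kl}$ part to the $x^r_{lk}$ part of the same node sums); and the $P$-variables are reset to their minimizing values, which by \eqref{Pkl}--\eqref{Const_last} equal $p_{\sigma(a,b)}$ on the backup arc $\sigma(c,d)$ and $0$ on every other arc, so that \eqref{dv_Pkl} also holds since $p_{\sigma(a,b)}=p_{ab}\le\max_{\{k,l\}\in E}p_{kl}$.

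For the objective I would invoke the first structural fact. The set-up terms $\sum_{k\in V}f_kz_k+\sum_{\{k,l\}\in E}h_{kl}y_{kl}$ are unchanged, while the routing cost of $r$ is $(1-p_{ab})C^r_{ab}+p_{ab}C^r_{cd}$ before and $(1-p_{\sigma(a,b)})C^r_{\sigma(a,b)}+p_{\sigma(a,b)}C^r_{\sigma(c,d)}$ afterwards. Since $p_{\sigma(a,b)}=p_{ab}\in[0,1]$ and, by the choice of $\sigma$, $C^r_{\sigma(a,b)}\le C^r_{ab}$ and $C^r_{\sigma(c,d)}\le C^r_{cd}$, the new routing cost is no larger. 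Thus the modified solution is feasible with objective value at most that of the original, hence optimal, and by construction $x^r_{lk}=\bar x^r_{lk}=0$ for every proper arc $(l,k)$ that is not the retained orientation, i.e. for every $(l,k)$ with $C^r_{kl}\le C^r_{lk}$ (ties resolved by $\sigma$). Finally $P^r_{lk}=0$ follows because, with $\bar x^r_{lk}=0$ and a single active original arc of probability at most $1$, the right-hand side of \eqref{Pkl} is nonpositive, so the minimization drives $P^r_{lk}$ to $0$.

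The step I expect to require the most care is the bookkeeping around the $P$-variables together with the tie/loop caveat. The reorientation is clean precisely because $p_{kl}=p_{lk}$ keeps the failure probability of the chosen original arc constant, so the lower bound on the backup $P$-variable in \eqref{Pkl} is unaffected; and because the statement must be read for proper arcs under a tie-breaking convention, otherwise, for an edge with $C^r_{kl}=C^r_{lk}$, zeroing both orientations would be infeasible, and loops $(k,k)$ (for which $C^r_{kk}\le C^r_{kk}$ holds trivially) must be excluded.
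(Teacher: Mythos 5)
Your proof is correct and takes essentially the same approach as the paper's: the paper's two-line argument is exactly your exchange step---flip any used arc to the cheaper direction of its edge, which preserves feasibility and the expected-cost weights because the failure probability is attached to the edge ($p_{kl}=p_{lk}$), and apply the same switch to the backup arc so that $P^r_{lk}=0$ then follows from \eqref{Pkl}. Your additional care about tie-breaking, the exclusion of loops, and the $P$-variable bookkeeping makes explicit what the paper leaves implicit, but introduces no genuinely different idea.
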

\begin{proof}
The proof is straightforward. Indeed, the value of any solution with $x^r_{lk}=1$ where $C^r_{kl}< C^r_{lk}$ will improve by changing the direction in which edge $\{k,l\}$ is traversed, i.e., by doing $x^r_{lk}=0$, $x^r_{kl}=1$. When $C^r_{kl}= C^r_{lk}$ the value of the new solution will not change.

The same argument can be applied for setting $\bar x^r_{lk}=0$ and thus, $P^r_{lk}=0$.
\end{proof}
Note that the above result allows one to reduce by one half the number of decision variables.

In practice, it is likely that there are few possible values for failure probabilities, and the edges of the network are clustered in groups such that, within each group, all edges have the same failure probability. We next analyze such situation.
\begin{rmk}[Clustered sets of edges]
Let us assume that the  edges in $E$  are clustered in $K$ groups $E_1, \ldots, E_K$ such that all edges in $E_s$ have the same failure probability $\rho_s \in [0,1]$, for $s=1, \ldots, K$. Then, in the term of the objective function of \eqref{M1} corresponding to the expected cost of the commodities, variables $P_{kl}^r$ can be substituted by a new set of variables as follows. For $r\in R$, $(k, l)\in A_s$,  being $A_s$ the arc set induced by $E_s$, $s=1, \ldots, K$, let $\xi_{kls}^r$ be a binary variable that takes value one if and only if the original route of commodity $r$ uses some hub arc in the $s$-th cluster (with failure probability $\rho_s$) and in the backup route it uses hub arc $(k,l)$.

Then, the expected routing cost of commodity $r\in R$ can be rewritten as:
$$
\dsum_{s=1}^K \Big[(1-\rho_s) \dsum_{(k,l)\in A_s} C_{kl}^r  x_{kl}^r + \rho_s \dsum_{(k,l)\in A} C_{kl}^r  \xi_{kls}^r\Big].
$$
Using similar arguments as for the linearization of variables $P_{kl}^r$, the values of the $\xi_{kls}^r$ variables can be determined by the following sets of constraints:
\begin{align}
&\xi_{kls}^r \geq \dsum_{(k^\prime,l^\prime)\in A_s} x_{k^\prime l^\prime}^r + (\bar x_{kl}^r -1)&& \forall r\in R, (k,l) \in A, s=1, \ldots, K\label{1_xi}\\
&\dsum_{s=1}^K\xi_{kls}^r=\bar x^r_{kl} &&  \forall r\in R, (k,l) \in A\label{last_xi}\\
&\xi_{kls}^r \geq 0  &&\forall r\in R, (k,l) \in A, s=1, \ldots, K.\label{2_xi}
\end{align}

The particular case of one single cluster  ($K=1$) where all edges have the same failure probability, i.e., $p_{kl}=\rho$
 for all $\{k, l\}\in E$, allows to further simplify the above formulation. Now the index $s$ can be dropped from variables $\xi$ and Constraints \eqref{1_xi}-\eqref{2_xi} are no longer needed, as Constraints \eqref{last_xi} reduce to $\xi_{kl}^r=\bar x^r_{kl}$, $(k,l)\in A$. Then, the  expected routing cost of commodity $r\in R$ simplifies to:
 $$
\dsum_{(k,l)\in A} C_{kl}^r  \Big((1-\rho) x_{kl}^r + \rho \overline x_{kl}^r\Big).
$$
\end{rmk}

\section{HLPLF with $\lambda$-connected backbone networks}\label{sect:unrestricted2}

In this section we introduce a different model for the HLPLF, that will be referred to as $\lambda$-connected HLPLF (HLPLF-$\lambda$). Again we make the assumption that the original routing paths contain at least one hub node and exactly one inter-hub arc, although we follow a different modeling approach as for how to protect the backbone network $(H, E_H)$ against potential failures. On the one hand, we extend the set of alternative paths that can be used when hub edges in original paths fail, and allow for any arbitrarily long chain of arcs connecting the OD pair of each commodity, provided that all its intermediate arcs are activated inter-hub arcs. On the other hand, we no longer make explicit the alternative routing paths for the commodities.  Instead,
we impose that the backbone network is $\lambda$-connected, in the sense that it must contain at least $\lambda$ routing paths connecting any pair of activated hubs $k$, $l\, \in H$ with $k\neq l$, where $\lambda\geq 2$ is a given {integer} parameter. This implies that if some hub arc of the original path fails, then the backbone network contains at least $\lambda-1$ alternative paths connecting the activated hubs.  Note that, this forces the backbone network to have at least $\lambda$ activated hub nodes.
The particular case of HLPLF-$\lambda$ with $\lambda=2$, extends the HLPLF-1BP studied in the previous section, as it enforces at least one backup path in the backbone network in addition to the original one, which can be arbitrarily long.

We recall that for any non-empty subset of nodes $S\subset V$, the cutset associated with $S$ is precisely the set of edges connecting $S$ and $V\backslash S$ namely:
$$
\delta(S)=\{\{k, l\} \in E | k \in S, l \in V\setminus S\}.
$$
Observe that  the backbone network $(H, E_H)$ depicted in Figure  \ref{fig0:0} is 2-connected since any cutset  has at least two edges (possibly one of them being a loop).

Let us introduce the following additional notation. For a given indicator vector $\bar y\in\{0, 1\}^{|E|}$:
$$
\bar y(\delta(S))=\sum_{\{k,l\}\in \delta(S)} \bar y_{kl}.
$$
That is $\bar y(\delta(S))$ gives the number of edges in the cutset $\delta(S)$, that are activated relative to vector $\bar y$.
When $\bar y=y$, with $y$ being the vector of hub edge decision variables as defined in HLPLF-1BP, then $y(\delta(S))$ gives precisely the number of inter-hub edges in the cutset $\delta(S)$.

Figure \ref{fig11} shows different choices for backup paths in case the  hub arc $(k, l)$, using in the original path for commodity $r=(o_r,d_r,w_r)$,  fails. Note that while the backup paths drawn in Figures \ref{fig4b} and \ref{fig4d} are also valid for HLPLF-1BP, the backup path shown in Figure \ref{fig4c} uses two inter-hub arcs, thus not being valid for HLPLF-1BP. Similarly to HLPLF-1BP, loops are also counted for the $\lambda$-connectivity,  as they can be used both in original and backup paths. Note also that in case the original path for commodity $r=(o_r,d_r,w_r)$ uses the loop $(m,m)$, the backup paths in Figure \eqref{fig11} are also feasible.

\begin{figure}[h]
     \centering
     \begin{subfigure}[b]{0.32\textwidth}
         \centering
          \includegraphics[width=\textwidth]{fig3b}
         \caption{\scriptsize Backup path via $(q,m)$. \label{fig4b}}
     \end{subfigure}
     \begin{subfigure}[b]{0.32\textwidth}
         \centering
          \includegraphics[width=\textwidth]{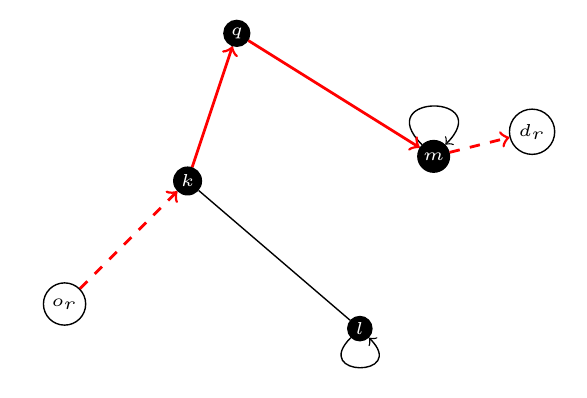}
         \caption{\scriptsize Backup path via $(k,q)$ and $(q,m)$. \label{fig4c}}
     \end{subfigure}
          \begin{subfigure}[b]{0.32\textwidth}
         \centering
          \includegraphics[width=\textwidth]{fig3d}
         \caption{\scriptsize Backup path via  $(l,l)$. \label{fig4d}}
     \end{subfigure}
         \caption{Alternative possibilities for backup paths in the $\lambda$-connected model. \label{fig11}}
    \end{figure}

With the above notation, and taking into account that, by definition, $\delta(S)$ contains no loop hub edges, the $\lambda$-connectivity of the backbone network can be stated by means of the following constraints, associated with each subset $S\subset V$, and each pair of potential hubs $k, l\in V$  with $k\in S$, $l\notin S$:
\begin{align}
y(\delta(S))+y_{kk}\geq \lambda \left(z_{k}+z_{l}-1\right). \label{no-single}
\end{align}
 The right hand side of the above constraint can take a strictly positive value only when $k , l$ are activated hub nodes, that is, when the cutset $\delta(S)$ is a cutset of the backbone network.
In this case, the inequality imposes that $\delta(S)$ contains at least  $\lambda-y_{kk}$ activated hub edges. As indicated, the loop $\{k, k\}$  has also been taken into account  as a potential hub edge since it can be used in  routing paths. Hence, if the loop $\{k, k\}$ is  activated as hub edge, it should be discounted from the number of  hub edges in $\delta(S)$ that must be activated. Summarizing, the above constraint imposes that, if nodes $k, l$, $k\in S$, $l\notin S$, are activated hubs, then the number of hub edges in the cutset $\delta(S)$ must be at least $\lambda-1$  if the loop $\{k, k\} $ is activated  as a hub edge or $\lambda$ otherwise. The $\lambda$-connectivity of singletons can be imposed by means of constraints $y(\delta(k))+y_{kk}\geq \lambda z_k$, $k\in V$, which have an analogous interpretation. 

Below we develop a MILP formulation for the HLPLF-$\lambda$, which incorporates $\lambda$-connectivity by means of the family of constraints \eqref{no-single} introduced above. The formulation uses the same $z$, $y$, and $x$ variables as before. Still, since backup paths are no longer made explicit, variables $\bar x$ used in formulation HLPLF-1BP of the previous section are no longer needed. As explained, the $y$ variables will be used to impose the $\lambda$-connectivity condition, which will be stated by means of an exponential set of constraints.

Given that backup routes are no longer made explicit, we no longer have  closed expressions for their expected routing costs and we must estimate their values. Let us denote by $\bar{C}^r_{kl}$ an estimation of the backup routing cost of commodity $r$ when the hub arc $(k,l)$ of its original routing path fails. The resulting formulation for the HLPLF-$\lambda$ is:

\begin{subequations}
    \makeatletter
        \def\@currentlabel{${\rm HLPLF-}\lambda$}
        \makeatother
       \label{HLPLF-lambda}
        \renewcommand{\theequation}{$2.{\arabic{equation}}$}
\begin{align}
 \min &\sum_{k\in V}f_kz_k+ \sum_{\{k,l\}\in E}h_{kl} y_{kl}   +& \sum_{r\in R}\sum_{(k,l)\in A} & \left[(1-p_{kl}) C^r_{kl}+ p_{kl} \bar{C}^r_{kl}\right] x^{r}_{kl} \nonumber\\% \nonumber \\
 \mbox{s.t. }
& \sum_{(k,l)\in A}x^{r}_{kl} =1\,       \qquad &&  \forall     {r\in R} \label{c1}\\%\nonumber\\
& x^{r}_{kl}+x^{r}_{lk}\le y_{kl}        &&  \forall   r\in R, \{k, l\}\in E, k\neq l \label{c2}\\%\nonumber\\
& x^{r}_{kk}\le y_{kk}         &&  \forall   r\in R, k \in V\label{c3}\\% \nonumber\\
%& x^{r}_{kk}\le y_{kk},         &&  \forall   r\in R, k\in V\label{c4}\\% \nonumber\\
& y_{kl}\leq z_k                                                     && \forall \{k, l\}\in E \label{c5}\\%\nonumber\\
& y_{kl}\leq z_l     && \forall \{k, l\}\in E, k\ne l\label{c6}\\%\nonumber\\
& y(\delta(S))+y_{kk}\geq \lambda  (z_{k}+z_l-1) && \forall S\subset V,\, |S|\geq 2,\,  k\in S,\, l \notin S
\label{cutset}\\
& y(\delta(k)) + y_{kk} \geq \lambda z_k && \forall  k\in V \label{cutset2}\\
& x^{r}_{kl}\in\{0, 1\} && \forall r\in R, (k,l)\in A\label{c7}\\% \nonumber\\
& z_k\in\{0, 1\}         &&    \forall  k\in V\label{c8}\\% \nonumber\\
& y_{kl}\in\{0, 1\}         &&    \forall  \{k, l\}\in E. \label{c9}%\nonumber
\end{align}
\end{subequations}
where constraints \eqref{c1}-\eqref{c6} are similar to \eqref{model1:1}-\eqref{model1:6} but referring to the original path only, and \eqref{cutset} and \eqref{cutset2} are the $\lambda$-connectivity constraints described above.

{Note that once the hub backbone network $(H,E_H)$ is obtained by solving the above problem, one can explicitly compute a backup path for a commodity $r\in R$, whose original path is $(o_r, k_r,l_r, d_r)$, by solving (in polynomial time) a
  shortest path problem from source $o_r$ to destination $d_r$ on the graph $G_r=(V_r,E_r)$ with nodes $V_r=\{o_r, d_r\} \cup H$ and edges $E_r=\{\{o_r, h\}: h \in H\} \cup E_h \cup \{(h,d_r): h\in H\}\backslash \{k_r,l_r\}$.}

\begin{prop}
Let $S\subset V$ be a nonempty subset of nodes with $|S|\leq \lambda-1$. Then, the set of constraints \eqref{cutset} is dominated by the set of constraints, which are also valid for \eqref{HLPLF-lambda}:
\begin{align}\tag{$2.6'$}
&y(\delta(S)) + y_{kk} \geq \lambda z_k && \forall k \in S.\label{singlelambda}
\end{align}
\end{prop}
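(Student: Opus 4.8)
The plan is to read ``dominated'' in the polyhedral sense, so that two things must be checked: (i) every solution satisfying \eqref{singlelambda} also satisfies \eqref{cutset}, so that \eqref{singlelambda} is at least as tight; and (ii) \eqref{singlelambda} is itself valid for \eqref{HLPLF-lambda}, so that replacing the block \eqref{cutset} by \eqref{singlelambda} loses no feasible solution. Step (i) is immediate and uses no hypothesis on $|S|$: since $z_l\le 1$ we have $z_k+z_l-1\le z_k$, hence $\lambda(z_k+z_l-1)\le\lambda z_k\le y(\delta(S))+y_{kk}$ whenever \eqref{singlelambda} holds, which is precisely \eqref{cutset} for every $l\notin S$. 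Thus all the content lies in step (ii), and this is where the hypothesis $|S|\le\lambda-1$ must enter.

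The key auxiliary fact I would establish first is that any feasible solution of \eqref{HLPLF-lambda} activates either no hubs or at least $\lambda$ of them. To see this, suppose $1\le\sum_{v\in V}z_v=t\le\lambda-1$ and pick an activated hub $k$. Constraint \eqref{cutset2} forces $y(\delta(k))+y_{kk}\ge\lambda$, while \eqref{c5}--\eqref{c6} give $y_{kl}\le z_l$ for $l\ne k$ and $y_{kk}\le z_k=1$; hence $y(\delta(k))=\sum_{l\ne k}y_{kl}\le\sum_{l\ne k}z_l=t-1\le\lambda-2$, so that $y(\delta(k))+y_{kk}\le\lambda-1<\lambda$, a contradiction. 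I expect this counting step to be the main (though short) obstacle, as it is the only place where the interplay between the singleton connectivity constraint \eqref{cutset2} and the coupling constraints $y\le z$ forces a lower bound on the number of open hubs.

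With this fact in hand, validity of \eqref{singlelambda} follows quickly. Fix $S$ with $|S|\le\lambda-1$ and $k\in S$. If $z_k=0$ the inequality reads $y(\delta(S))+y_{kk}\ge 0$, which holds by nonnegativity of the $y$ variables. If $z_k=1$, then the solution has at least $\lambda$ activated hubs, whereas $S$ can contain at most $|S|\le\lambda-1$ of them; consequently there exists an activated hub $l\notin S$, i.e.\ $z_l=1$. Applying the original constraint \eqref{cutset} to this pair $(k,l)$ yields $y(\delta(S))+y_{kk}\ge\lambda(z_k+z_l-1)=\lambda=\lambda z_k$, which is exactly \eqref{singlelambda}. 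Combined with step (i), this shows that \eqref{singlelambda} is a valid inequality implying \eqref{cutset} for every $l\notin S$, so the whole family \eqref{cutset} restricted to $|S|\le\lambda-1$ is dominated by the single family \eqref{singlelambda}, as claimed.
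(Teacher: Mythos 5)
Your proof is correct and takes essentially the same route as the paper: domination follows from $z_l\le 1$ (so $\lambda(z_k+z_l-1)\le\lambda z_k$), and validity follows because $\lambda$-connectivity forces at least $\lambda$ open hubs, so when $z_k=1$ there is an open hub $\bar l\notin S$ (using $|S|\le\lambda-1$) making some constraint \eqref{cutset} active with right-hand side $\lambda z_k$. The only difference is that you explicitly prove the lower bound on the number of open hubs via the counting argument combining \eqref{cutset2} with the coupling constraints \eqref{c5}--\eqref{c6}, a fact the paper asserts without proof; this makes your write-up slightly more self-contained, but the underlying argument is identical.
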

\begin{proof}
Indeed, \eqref{singlelambda} dominate \eqref{cutset}, since $z_l \leq 1$ implies that $\lambda z_k \ge \lambda (z_k+z_l-1)$.

We now see that \eqref{singlelambda} are valid for \eqref{HLPLF-lambda}. Taking into account that $\lambda$-connectivity with $\lambda\geq 2$ implies that any feasible solution has at least $\lambda$ open hubs and that $|S|\leq \lambda-1$, when $k\in S$ is activated as a hub node (i.e. $z_k=1$), there will be at least one more open hub $\bar l\notin S$ (i.e. $z_{\bar l}=1$). That is, when $z_k=1$ there will be at least one active constraint in the set \eqref{cutset} with right-hand-side value $\lambda(z_k+z_{\bar l}-1)=\lambda z_k$. When $k$ is not activated as a hub node (i.e. $z_k=0$) none of the constraints \eqref{cutset}, nor of the constraints \eqref{singlelambda} will be active. Therefore, the result follows.
\end{proof}

Note that when $S$ is a singleton, i.e., $S=\{k\}$, the set of constraints \eqref{singlelambda} reduces precisely to \eqref{cutset2}. Thus, in what follows, we replace  in \eqref{HLPLF-lambda} both constraints \eqref{cutset} and \eqref{cutset2} by \eqref{singlelambda}.

\subsection{Incorporation of $\lambda$-cutset constraints: a branch-and-cut approach} \label{Algorithm}

As already mentioned, in \eqref{HLPLF-lambda}, the size of the family of constraints \eqref{singlelambda} is exponential in the number of potential hub nodes, $n$. It is thus not possible to solve the formulation  directly with some off-the-shelf solver, even for medium size instances. In this section we present an exact branch-and-cut algorithm for this formulation in which, as usual, the family of constraints of exponential size \eqref{singlelambda} is initially relaxed. The strategy that we describe below is embedded within an enumeration tree and it is applied not only at the root node but also at all explored nodes.
Our separation procedure is an adaptation of the separation procedure for classical connectivity constraints \citep{PG-1985}, and follows the same vein of those applied to more general connectivity inequalities in node and arc routing problems (see, e.g., \citep{BB-COA-1998,AFF09,RPFLBM19} for further details).

The initial formulation includes all constraints \eqref{c1}-\eqref{c6}, and the singleton version of \eqref{singlelambda}. Furthermore, all integrality conditions are relaxed.

Let $(\overline{x},\overline{y} ,\overline{z})$ be the solution to the current LP and let $G(\overline{y})=(V(\overline{y}), E(\overline{y}))$ denote its associated support graph  where  $E(\overline{y})$ consists of all the edges of $E$ such that $\bar y_{kl}>0$ and $V(\overline{y})$ the set of endnodes of the edges of $E(\overline{y})$. Each edge  $(k,l)\in E(\overline{y})$ is associated with a capacity $\bar y_{kl}$.
The separation for inequalities \eqref{singlelambda} is to find $S \subset V$, and $k\in S$, with  $\overline{y}(\delta(S))<\lambda\overline z_{k} - \overline y_{kk}$ or to prove that no such inequality exists. Note that, when they exist, violated $\lambda$-connectivity constraints \eqref{singlelambda} can be identified from a tree of min-cuts associated with $G(\overline{y})$ relative to the capacities vector $\overline{y}$, $T(\overline{y})$.

Therefore, to solve the above separation problem we proceed as follows. For each min-cut $\delta(S)$ of $T(\overline{y})$ of value $\overline y(\delta(S))$, we identify $\overline k \in\arg\max \{\lambda\;\overline z_{k} - \overline y_{kk} : k\in S\}$. Then,  if $\overline y(\delta(S))<\lambda\,\overline z_{\overline k} - \overline y_{\overline k\overline k}$, the inequality \eqref{singlelambda} associated with $S$ and $\overline k$ is violated by $\overline{y}$.

We use the procedure proposed by \cite{G-SJAM-1990} to identify $T(\overline{y})$. Such an algorithm computes $V(\overline{y})$ max-flows in $G(\overline{y})$, so its overall complexity is $\mathcal{O}(|V(\overline{y})|\times |V(\overline{y})|^3)$.

\section{Computational Experience}\label{sec:comput}

In this section we report the results of an extensive battery of computational tests, which have been carried out to analyze the performance of the two modeling approaches for obtaining robust hub networks protected under inter-hub failures, discussed in the previous sections. For the experiments, we have used a large set of benchmark instances based on the well-known CAB (\cite{OKelly_EJOR87}), AP (\cite{EK96}) and {on the} TR (\cite{tan2007hub}) datasets (taken from the \texttt{phub} datasets in ORLIB~\url{http://people.brunel.ac.uk/~mastjjb/jeb/orlib/} and \url{https://ie.bilkent.edu.tr/~bkara/dataset.php}), for varying settings of the failure probabilities and other parameters as described below. All instances were solved with the Gurobi 9.1.1 optimizer, under a Windows 10 environment on an Intel(R)  Core(TM) i7-6700K CPU @ 4.00 GHz 4.01 GHz processor and 32 GB of RAM. Default values were used for all parameters of Gurobi solver and a computing  time limit of 7200 seconds was set.

\subsection{Instances generation} \label{sec:genera}
We have generated several instances based on the entire CAB, AP and TR  datasets with a number of nodes ($n$) initially ranging in $\{10, 15, 20, 25\}$ for the instances based on the CAB and TR datasets and in $\{10, 20, 25\}$ for the instances based on the AP dataset. Let $c^\prime_{kl}$ be the standard unit transportation costs provided in ORLIB for CAB and AP instances or the travel distances provided for the TR instances.

The unit routing costs for access/distribution arcs ($\overline c_{ij}$) and the inter-hub routing costs ($c_{kl}$) have been obtained as follows. We take the original costs as  the unit routing cost through the access and delivery arcs, i.e.,  $\overline c_{ij}=c^\prime_{ij}$. For the routing costs through the inter-hubs arcs, we assume that these costs include  not only transportation costs but also some additional handling costs at the endnodes of the traversed arcs, associated with the collection (at the entering node) and redistribution (at the leaving node) of the routed commodity.  Then, we define the unit routing costs through arc $(k,l) \in A$ as:
$$
c_{kl} = \alpha (a_k +  c^\prime_{kl} + d_l),
$$
where:
\begin{itemize}
\item $\alpha \in [0,1]$ is the usual discount factor applied to  routing costs through inter-hub arcs due to economies of scale. Three values for the discount factor $\alpha\in\{0.2, 0.5, 0.8\}$ have been considered in our study.
\item $a_k\geq 0$ and $d_k\geq 0$ are the unit collection and redistribution costs at node $k$, respectively.    Note that applying the discount factor $\alpha$ to these terms implies no loss of generality. Note also that with this choice of costs, in case  $k=l$, the unit routing (service) cost through the  loop $(k, k)$ reduces to $c_{kk}=\alpha(a_k+d_k)$. In our computational study we define $a_k=d_k=\min\{\min_{j\neq k} c^\prime_{kj}, \min_{j\neq k} c^\prime_{jk}\}$.
\end{itemize}

As usual in the literature~\citep{OKelly_PRS92}, we have considered the same {set-up} costs for all {potential  hubs} $k \in V$, $f_k=100$  for the CAB {dataset},  two types of  set-up costs ($T$ and $L$) for the hub nodes provided with the AP dataset, and the set-up costs provided in the TR dataset. Service demand, $w_r$, $r\in R$,  was also taken from the provided datasets.

As  considered in the literature (see e.g., \cite{alumur2009design}, \cite{calik2009tabu}), the set-up costs for activating hub edges for the CAB and the AP datasets were set:
 $$h_{kl} = \left\{\begin{array}{cl}
100  \dfrac{c_{kl}/\textsc{w}_{kl}}{\textsc{maxw}} & \mbox{if $k\neq l$},\vspace*{0.2cm}\\
100  \dfrac{c_{kl}/\bar{\textsc{w}}}{\textsc{maxw}} & \mbox{if $k=l$}.
\end{array}\right.$$
where \textsc{w} is the normalized vector of flows, $\bar{\textsc{w}}$ is the mean of  \textsc{w} and \textsc{maxw}$=\max\{\frac{c_{ij}}{\textsc{w}_{ij}}\, :\, i,j\in V, \, \textsc{w}_{ij}>0\}$ and for TR those provided in the original dataset.\\

In formulation (HLPLF-$\lambda$),  we have estimated the costs of backup paths as $\bar C_{kl}^r = (1+\beta) C_{kl}^r$ for two different values of $\beta \in \{0.5, 1\}$. Observe that in this case the expected routing cost simplifies to:
$$
\sum_{r\in R}\sum_{(k,l)\in A}  (1+\beta p_{kl}) C^r_{kl}  x^{r}_{kl}.\\
$$

As for the failure probabilities $p_{kl}$, $\{k,l\} \in E$, we have considered three different scenarios:
\begin{itemize}
\item[\textbf{RP:}] Random probabilities. The failure probability of each edge is randomly generated from a uniform distribution, i.e. $p_{kl}\equiv U[0,\rho]$ for all $\{k,l\} \in E$.
\item[\textbf{CP:}] Clustered probabilities. Edges are clustered into three groups, each of them with a different failure probability. For this, each edge $\{k, l\} \in E$ is randomly assigned a failure probability in $p_{kl}\in\{0.1, 0.2, 0.3\}$.
\item[\textbf{SP:}] Same probability. All edges have the same failure probability, i.e. $p_{kl}=\rho$, for all $\{k,l\} \in E$.%, including loops.
\end{itemize}
The values of  the parameter $\rho$ we have used in  \textit{RP} and \textit{SP} scenarios are $\rho \in \{0.1,  0.3\}$. {The files of the randomly generated probabilities are available in the Github repository \url{https://github.com/vblancoOR/HLPLF}.

For each combination of parameters $n,\, \alpha, \,  \rho$, and each {dataset} (CAB, AP  with  type T and L fixed set-up hub nodes costs, and TR)  five different instances have been generated for scenario  of  failure probabilities RP and one instance has been considered for scenario  SP. Five instances have been also generated  for scenario  {CP} and each combination of $n,\, \alpha$, and each {dataset}. Thus,  (HLPLF-1BP), {hereafter called} M1, has been solved on a total of 714 instances.

Concerning formulation (HLPLF-$\lambda$), we considered three different values for the parameter $\lambda$, namely $\lambda=2$, $\lambda=3$  and $\lambda=4$ (we call {the corresponding} models M2\_2, M2\_3 and M2\_{{4}},  respectively). Thus,  (HLPLF-$\lambda$) has been solved on a total of 4284 instances. Additionally, for comparative purposes, we have solved 42 instances of the Uncapacitated Hub Location Problem, in which no protection under failures is considered. This model will be referred to as M0.

Finally, to test the scalability of our formulations, a second experiment was carried out on {a set of} larger instances ($n \in \{ 40, 50\}$)  based on the AP and TR {datasets}  considering only  (HLPLF-$\lambda$), which, as we will see, is the most promising formulation, for $\lambda \in\{2,4\}$ and $\beta=1$. We have solved a total of {612} instances in this second study.
Overall, 5652 instances have been solved.

Table \ref{tab:summary} summarizes the main characteristics of the testing instances and the selected parameters.

\begin{table}%[htbp]
  \begin{center}
\hspace*{-1cm}
		\def\arraystretch{2.2}
\adjustbox{scale=0.8}{ \begin{tabular}{c|c|c|c|c|c|c}
  {Instances} & $n$ & $\alpha$ & $c_{kl}$ & $f_k$& $h_{kl}$ & $\lambda$\\%\vspace{5pt}\\
  \hline
CAB & $\{10, 15, 20, 25\}$ & \multirow{3}{*}{\{0.2, 0.5, 0.8\}} & \multirow{3}{*}{$a_k +  c^\prime_{kl} + d_l$} & 100 & \multirow{2}{*}{$\left\{\begin{array}{cl}
100  \dfrac{c_{kl}/\textsc{w}_{kl}}{\textsc{maxw}} & \mbox{ if } k\neq l,\vspace*{0.2cm}\\
100  \dfrac{c_{kl}/\bar{\textsc{w}}}{\textsc{maxw}} & \mbox{ if } k=l.
\end{array}\right.$} & \multirow{3}{*}{$\{2, 3, 4\}$}\\
AP & $\{10, 20, 25, 40, 50\}$  &           &                                                & \multirow{2}{*}{Data file} & &\\
TR & $\{10, 15, 20, 25, 40, 50\}$  &           &                                                & &Data file &\vspace{7pt}\\
\hline
%\vspace{5.pt}
\hline
\multicolumn{7}{c}{Failure probabilities}\\
\hline
%\multicolumn{3}{c|}{Random probabilities (RP)} & \multicolumn{4}{l}{$\qquad p_{kl}\sim U[0,\rho], \quad\qquad\rho\in\{0.1, 0.3\}$}\\
%\multicolumn{3}{c|}{Clustered probabilities (CP)} & \multicolumn{4}{l}{$\qquad p_{kl}\in\{0.1, 0.2, 0.3\}$}\\
%\multicolumn{3}{c|}{Same probability (SP)} & \multicolumn{4}{l}{$\qquad p_{kl}=\rho, \qquad\quad \qquad\rho\in\{0.1, 0.3\}$}\\
\multicolumn{3}{c|}{Random probabilities (RP)} & \multicolumn{2}{l}{$\qquad p_{kl}\sim U[0,\rho],$} & \multicolumn{1}{l}{$\rho\in\{0.1, 0.3\}$ }&\\
\multicolumn{3}{c|}{Clustered probabilities (CP)} & \multicolumn{2}{l}{$\qquad p_{kl}\in\{0.1, 0.2, 0.3\}$}&\multicolumn{1}{l}{}&\\
\multicolumn{3}{c|}{Same probability (SP)} & \multicolumn{2}{l}{$\qquad p_{kl}=\rho,$} &\multicolumn{1}{l}{ $\rho\in\{0.1, 0.3\}$}&\\
\hline
   \end{tabular}}%
    \vspace*{0.25cm}\\
   \caption{Summary of instances and parameters.\label{tab:summary}}
  \end{center}
\end{table}

\begin{table}%[htbp]
  \begin{center}
  \scriptsize%\centering
\adjustbox{scale=0.8}{\begin{tabular}{ccc|rrrrr|rrrrr|rrrrr}
          &       &       & \multicolumn{5}{c|}{CPUTime}          & \multicolumn{5}{c|}{MIPGAP}           & \multicolumn{5}{c}{\%Solved} \\
          &       &       & \multicolumn{2}{c}{RP} & \multicolumn{1}{c}{\multirow{2}[3]{*}{CP}} & \multicolumn{2}{c|}{SP} & \multicolumn{2}{c}{RP} & \multicolumn{1}{c}{\multirow{2}[3]{*}{CP}} & \multicolumn{2}{c|}{SP} & \multicolumn{2}{c}{RP} & \multicolumn{1}{c}{\multirow{2}[3]{*}{CP}} & \multicolumn{2}{c}{SP} \\
\cline{4-5}\cline{7-10}\cline{12-15}\cline{17-18}     n    & $\alpha$ & Data  & \multicolumn{1}{c}{0.1} & \multicolumn{1}{c}{0.3} &       & \multicolumn{1}{c}{0.1} & \multicolumn{1}{c|}{0.3} & \multicolumn{1}{c}{0.1} & \multicolumn{1}{c}{0.3} &       & \multicolumn{1}{c}{0.1} & \multicolumn{1}{c|}{0.3} & \multicolumn{1}{c}{0.1} & \multicolumn{1}{c}{0.3} &       & \multicolumn{1}{c}{0.1} & \multicolumn{1}{c}{0.3} \\
    \hline
    \multirow{12}[5]{*}{10} & \multirow{4}[2]{*}{0.2} &${\rm AP}_T$& 4     & 7     & 15    & 1     & 1     & 0.00  & 0.00  & 0.00  & 0.00  & 0.00  & 100   & 100   & 100   & 100   & 100 \\
          &       & ${\rm AP}_L$ & 7     & 93    & 13    & 2     & 3     & 0.00  & 0.00  & 0.00  & 0.00  & 0.00  & 100   & 100   & 100   & 100   & 100 \\
          &       & CAB   & 143   & 5198  & \texttt{TL}   & 1     & 1     & 0.00  & 0.45  & 1.08  & 0.00  & 0.00  & 100   & 40    & 0     & 100   & 100 \\
          &       & TR    & 4     & 9     & 44    & 0     & 1     & 0.00  & 0.00  & 0.00  & 0.00  & 0.00  & 100   & 100   & 100   & 100   & 100 \\
\cline{2-18}          & \multirow{4}[2]{*}{0.5} &${\rm AP}_T$& 6     & 21    & 14    & 1     & 1     & 0.00  & 0.00  & 0.00  & 0.00  & 0.00  & 100   & 100   & 100   & 100   & 100 \\
          &       & ${\rm AP}_L$ & 10    & 112   & 11    & 2     & 3     & 0.00  & 0.00  & 0.00  & 0.00  & 0.00  & 100   & 100   & 100   & 100   & 100 \\
          &       & CAB   & 62    & 5391  & 1864  & 1     & 1     & 0.00  & 0.95  & 0.00  & 0.00  & 0.00  & 100   & 40    & 100   & 100   & 100 \\
          &       & TR    & 5     & 11    & 63    & 0     & 1     & 0.00  & 0.00  & 0.00  & 0.00  & 0.00  & 100   & 100   & 100   & 100   & 100 \\
\cline{2-18}          & \multirow{4}[1]{*}{0.8} &${\rm AP}_T$& 7     & 13    & 15    & 1     & 1     & 0.00  & 0.00  & 0.00  & 0.00  & 0.00  & 100   & 100   & 100   & 100   & 100 \\
          &       & ${\rm AP}_L$ & 9     & 99    & 12    & 1     & 3     & 0.00  & 0.00  & 0.00  & 0.00  & 0.00  & 100   & 100   & 100   & 100   & 100 \\
          &       & CAB   & 25    & \texttt{TL}   & 727   & 1     & 1     & 0.00  & 0.61  & 0.00  & 0.00  & 0.00  & 100   & 0     & 100   & 100   & 100 \\
          &       & TR    & 5     & 19    & 53    & 0     & 1     & 0.00  & 0.00  & 0.00  & 0.00  & 0.00  & 100   & 100   & 100   & 100   & 100 \\
    \hline
    \multirow{6}[4]{*}{15} & \multirow{2}[1]{*}{0.2} & CAB   & \texttt{TL}   & \texttt{TL}   & \texttt{TL}   & 2     & 7     & 0.60  & 8.35  & 9.37  & 0.00  & 0.00  & 0     & 0     & 0     & 100   & 100 \\
          &       & TR    & 51    & 71    & 287   & 4     & 6     & 0.00  & 0.00  & 0.00  & 0.00  & 0.00  & 100   & 100   & 100   & 100   & 100 \\
   \cline{2-18}        & \multirow{2}[2]{*}{0.5} & CAB   & 4577  & \texttt{TL}   & \texttt{TL}   & 5     & 8     & 0.37  & 9.85  & 7.62  & 0.00  & 0.00  & 40    & 0     & 0     & 100   & 100 \\
          &       & TR    & 52    & 89    & 619   & 3     & 5     & 0.00  & 0.00  & 0.00  & 0.00  & 0.00  & 100   & 100   & 100   & 100   & 100 \\
   \cline{2-18}        & \multirow{2}[1]{*}{0.8} & CAB   & 511   & \texttt{TL}   & \texttt{TL}   & 4     & 6     & 0.00  & 8.76  & 3.65  & 0.00  & 0.00  & 100   & 0     & 0     & 100   & 100 \\
          &       & TR    & 55 & 138   & 412   & 3     & 4     & 0.00  & 0.00  & 0.00  & 0.00  & 0.00  & 100   & 100   & 100   & 100   & 100 \\
    \hline
    \multirow{12}[4]{*}{20} & \multirow{4}[1]{*}{0.2} &${\rm AP}_T$& 1812  & \texttt{TL}   & 6336  & 43    & 30    & 0.00  & 5.92  & 1.38  & 0.00  & 0.00  & 100   & 0     & 20    & 100   & 100 \\
          &       & ${\rm AP}_L$ & \texttt{TL}   & \texttt{TL}   & \texttt{TL}   & 3443  & 2080  & 14.80 & 20.53 & 17.81 & 0.00  & 0.00  & 0     & 0     & 0     & 100   & 100 \\
          &       & CAB   & \texttt{TL}   & \texttt{TL}   & \texttt{TL}   & 13    & 121   & 3.10  & 14.43 & 15.38 & 0.00  & 0.00  & 0     & 0     & 0     & 100   & 100 \\
          &       & TR    & 360   & 1402  & 4131  & 32    & 32    & 0.00  & 0.00  & 1.07  & 0.00  & 0.00  & 100   & 100   & 80    & 100   & 100 \\
\cline{2-18}          & \multirow{4}[2]{*}{0.5} &${\rm AP}_T$& 1520  & \texttt{TL}   & 6611  & 53    & 34    & 0.00  & 2.92  & 1.29  & 0.00  & 0.00  & 100   & 0     & 20    & 100   & 100 \\
          &       & ${\rm AP}_L$ & \texttt{TL}   & \texttt{TL}   & \texttt{TL}   & 4308  & 1257  & 13.11 & 20.30 & 17.19 & 0.00  & 0.00  & 0     & 0     & 0     & 100   & 100 \\
          &       & CAB   & \texttt{TL}   & \texttt{TL}   & \texttt{TL}   & 20    & 51    & 3.01  & 15.65 & 15.55 & 0.00  & 0.00  & 0     & 0     & 0     & 100   & 100 \\
          &       & TR    & 363   & 1850  & 5497  & 21    & 36    & 0.00  & 0.00  & 3.60  & 0.00  & 0.00  & 100   & 100   & 60    & 100   & 100 \\
\cline{2-18}          & \multirow{4}[1]{*}{0.8} &${\rm AP}_T$& 1693  & 7137  & 5780  & 40    & 33    & 0.00  & 3.82  & 1.61  & 0.00  & 0.00  & 100   & 20    & 40    & 100   & 100 \\
          &       & ${\rm AP}_L$ & \texttt{TL}   & \texttt{TL}   & \texttt{TL}   & 2343  & 1809  & 12.60 & 18.98 & 15.63 & 0.00  & 0.00  & 0     & 0     & 0     & 100   & 100 \\
          &       & CAB   & \texttt{TL}   & \texttt{TL}   & \texttt{TL}   & 12    & 27    & 2.85  & 16.39 & 16.22 & 0.00  & 0.00  & 0     & 0     & 0     & 100   & 100 \\
          &       & TR    & 422   & 1864  & 5625  & 19    & 33    & 0.00  & 0.00  & 0.00  & 0.00  & 0.00  & 100   & 100   & 100   & 100   & 100 \\
 \hline
    \multirow{12}[4]{*}{25} & \multirow{4}[1]{*}{0.2} &${\rm AP}_T$& 4774  & 6835  & 4731  & 128   & 129   & 2.35  & 12.50 & 0.00  & 0.00  & 0.00  & 80    & 20    & 100   & 100   & 100 \\
          &       & ${\rm AP}_L$ & \texttt{TL}   & \texttt{TL}   & \texttt{TL}   & \texttt{TL}   & \texttt{TL}   & 16.58 & 22.81 & 19.76 & 11.46 & 14.21 & 0     & 0     & 0     & 0     & 0 \\
          &       & CAB   & \texttt{OoM} & \texttt{TL}   & \texttt{TL}   & 62    & 3208  & 4.33  & 19.52 & 19.21 & 0.00  & 0.00  & 0     & 0     & 0     & 100   & 100 \\
          &       & TR    & 2332  & \texttt{TL}   & \texttt{TL}   & 119   & 169   & 0.00  & 11.98 & 17.25 & 0.00  & 0.00  & 100   & 0     & 0     & 100   & 100 \\
\cline{2-18}          & \multirow{4}[2]{*}{0.5} &${\rm AP}_T$& 6389  & \texttt{TL}   & 4789  & 129   & 179   & 4.97  & 12.60 & 0.00  & 0.00  & 0.00  & 40    & 0     & 80    & 100   & 100 \\
          &       & ${\rm AP}_L$ & \texttt{TL}   & \texttt{TL}   & \texttt{TL}   & \texttt{TL}   & \texttt{TL}   & 14.72 & 20.82 & 18.12 & 8.58  & 13.39 & 0     & 0     & 0     & 0     & 0 \\
          &       & CAB   & \texttt{OoM} & \texttt{OoM} & \texttt{TL}   & 98    & 2403  & 4.69  & 23.21 & 20.37 & 0.00  & 0.00  & 0     & 0     & 0     & 100   & 100 \\
          &       & TR    & 1935  & \texttt{TL}   & \texttt{TL}   & 82    & 128   & 0.00  & 11.54 & 13.29 & 0.00  & 0.00  & 100   & 0     & 0     & 100   & 100 \\
\cline{2-18}          & \multirow{4}[1]{*}{0.8} &${\rm AP}_T$& 6465  & 6789  & 3490  & 125   & 316   & 5.06  & 10.28 & 0.00  & 0.00  & 0.00  & 40    & 20    & 100   & 100   & 100 \\
          &       & ${\rm AP}_L$ & \texttt{TL}   & \texttt{TL}   & \texttt{TL}   & 2927  & \texttt{TL}   & 13.77 & 20.52 & 17.54 & 0.00  & 11.90 & 0     & 0     & 0     & 100   & 0 \\
          &       & CAB   & \texttt{OoM} & \texttt{OoM} & \texttt{TL}   & 71    & 2740  & 7.07  & 23.99 & 18.90 & 0.00  & 0.00  & 0     & 0     & 0     & 100   & 100 \\
          &       & TR    & 2483  & \texttt{TL}   & \texttt{TL}   & 53    & 80    & 0.00  & 12.68 & 14.23 & 0.00  & 0.00  & 100   & 0     & 0     & 100   & 100 \\
    \end{tabular}}%
    \vspace*{0.25cm}\\
   \caption{Average Results for  (HLPLF-1BP).\label{tab:m1}}
  \end{center}
\end{table}%

\subsection{{Numerical results with (HLPLF-1BP) and  (HLPLF-$\lambda$)}}
The results obtained in our first computational study are summarized in {Tables \ref{tab:m1} and \ref{tab:m2}  for  (HLPLF-1BP) and  (HLPLF-$\lambda$), respectively}.  In both tables,  ``RP'',  ``CP'' and ``SP'' stand for the scenarios with   random failure probabilities (with $\rho= 0.1$ and  $\rho=  0.3$),  clustered failure probabilities and same failure probability (with $\rho= 0.1$ and  $\rho=  0.3$), respectively,  {as described in Section \ref{sec:genera}}.
The values of $n$,  $\alpha$ and  ``Data'' {in both tables} indicate the number of nodes in the network, the {value} for the discount factor applied to the routing cost through inter-hubs arcs, and the dataset that {has} been used to {obtain} the costs and the flows, respectively. ``AP$_T$'' and ``AP$_L$'' refer to AP dataset using type T and type L set-up costs for the hub nodes, respectively. In Table \ref{tab:m1},  for scenarios RP and CP, the information contained in each row refers to average values over the five instances with the corresponding combination of parameters, whereas for scenario SP the values of the entries correspond to the unique instance with this combination of parameters. In Table \ref{tab:m1} the numerical results of (HLPLF-1BP) are summarized in three blocks of columns.
 Block  ``CPUTime'' gives the computing times, in seconds, required to solve the instances, block ``MIPGap'' the percentage MIP gaps returned by Gurobi at termination, and block ``\%Solved'' the percentage of instances solved to proven optimality within the time limit. An entry ``\texttt{TL}'' in the CPUTime block means that the time limit of 7200 seconds was reached in all five instances of the group.
 The ``\texttt{OoM}''  entry indicates that  the flag  ``Out of memory'' was the output of the solver in at least one of the instances in the row, and then, the remaining information of the row refers to  average values over the solved instances only (even if none of these instances could be solved to proven optimality).

Table \ref{tab:m2} is organized in three blocks, $\lambda =2$, $\lambda =3$ and $\lambda=4$, for each of the three considered values of $\lambda$ in (HLPLF-$\lambda$).  We have observed that the value of the parameter $\beta$ does not affect the results and thus, in this table the information contained in each row refers to average values of 10 instances ($5\times 2$ different values of $\beta$)  for RP and CP scenarios and refers to the average values of  the two (different values of $\beta$) instances for SP scenario. Using formulation (HLPLF-$\lambda$), all instances have been solved  to proven optimality for all three considered values of $\lambda$. For this reason,  blocks ``MIPGap'' and ``\%Solved'' {have been omitted} in Table  \ref{tab:m2} since  MIPGap is 0.00 for all the instances and the percentage of solved instances  is always  $100\%$.

In Table \ref{tab:m1} we observe a different performance of (HLPLF-1BP) among the instances corresponding to the different scenarios.  Based on the computing times, MIPGAPs, and percentage of solved instances, scenario SP produces the easiest instances, for  all configurations of parameters{, as expected}. One can observe that, for instances with the same probability, all instances generated from the CAB, AP$_T$, and TR datasets  with up to $n=25$, as well as the instances generated from AP$_L$ with up to $n=20$ have been optimally solved within the time limit.

On the other hand, note that instances based on CAB dataset are more difficult to solve than instances based on TR  and AP datasets. TR based instances are the easiest to solve: all instances with up to $n=15$, $95\%$ for $n=20$ and $35\%$ for $n=25$  have been optimally solved within the time limit. Regarding AP based instances, all instances  with  $n=10$ have been optimally solved within the time limit, although AP$_T$ instances  consumed, in general, less computing time. The difference between AP$_T$ and AP$_L$ instances becomes more evident  for $n>10$, since approximately $50\%$ of the AP$_T$ instances were optimally  solved whereas none of the   AP$_L$  instances with random and clustered  probabilities {(scenarios RP and CP)} was solved to proven optimality within the time limit. As mentioned before, CAB instances are the most difficult ones. For $n=10$, $30\%$ of these instances could not be optimally solved solved within the time limit. This percentage increases up to $90\%$ for $n=20$ and up to $100\%$ for $n=25$. Additionally, for $n=25$,  the execution was stopped due to an Out of Memory flag with  $30\%$ of the CAB instances under the random probabilities (RP)  scenario.

\begin{table}%[htbp]
  \centering \scriptsize
    \adjustbox{scale=0.8}{\begin{tabular}{ccc|rrrrr|rrrrr|rrrrr}
          &       &       & \multicolumn{5}{c|}{$\lambda=2$} & \multicolumn{5}{c|}{$\lambda=3$} & \multicolumn{5}{c}{$\lambda=4$} \\
          &       &       & \multicolumn{2}{c}{RP} &       & \multicolumn{2}{c|}{SP} & \multicolumn{2}{c}{RP} &       & \multicolumn{2}{c|}{SP} & \multicolumn{2}{c}{RP} &       & \multicolumn{2}{c}{SP} \\
\cline{4-5}\cline{7-10}\cline{12-15}\cline{17-18}     n    &  $\alpha$  & Data  & \multicolumn{1}{c}{0.1} & \multicolumn{1}{c}{0.3} & \multicolumn{1}{c}{CP} & \multicolumn{1}{c}{0.1} & \multicolumn{1}{c|}{0.3} & \multicolumn{1}{c}{0.1} & \multicolumn{1}{c}{0.3} & \multicolumn{1}{c}{CP} & \multicolumn{1}{c}{0.1} & \multicolumn{1}{c|}{0.3} & \multicolumn{1}{c}{0.1} & \multicolumn{1}{c}{0.3} & \multicolumn{1}{c}{CP} & \multicolumn{1}{c}{0.1} & \multicolumn{1}{c}{0.3} \\
    \hline
    \multirow{12}[6]{*}{10} & \multirow{4}[2]{*}{0.2} & ${\rm AP}\_T$ & 2     & 2     & 3     & 2     & 2     & 3     & 5     & 5     & 3     & 5     & 3     & 3     & 2     & 3     & 2 \\
          &       & ${\rm AP}\_L$ & 6     & 3     & 6     & 7     & 6     & 2     & 2     & 2     & 2     & 2     & 3     & 4     & 3     & 3     & 5 \\
          &       & CAB   & 0     & 0     & 0     & 0     & 0     & 0     & 0     & 0     & 0     & 0     & 1     & 1     & 1     & 1     & 1 \\
          &       & TR    & 1     & 1     & 1     & 1     & 1     & 5     & 4     & 5     & 6     & 2     & 8     & 7     & 6     & 14    & 9 \\
\cline{2-18}          & \multirow{4}[2]{*}{0.5} & ${\rm AP}\_T$ & 2     & 2     & 3     & 4     & 2     & 3     & 4     & 4     & 4     & 4     & 3     & 3     & 2     & 2     & 2 \\
          &       & ${\rm AP}\_L$ & 6     & 3     & 8     & 5     & 3     & 2     & 2     & 2     & 3     & 2     & 3     & 3     & 2     & 2     & 4 \\
          &       & CAB   & 0     & 0     & 0     & 0     & 0     & 0     & 0     & 1     & 0     & 0     & 3     & 2     & 2     & 4     & 3 \\
          &       & TR    & 3     & 1     & 2     & 4     & 2     & 4     & 4     & 4     & 2     & 3     & 8     & 7     & 6     & 6     & 9 \\
\cline{2-18}          & \multirow{4}[2]{*}{0.8} & ${\rm AP}\_T$ & 2     & 3     & 3     & 3     & 2     & 3     & 3     & 3     & 4     & 2     & 2     & 2     & 3     & 2     & 2 \\
          &       & ${\rm AP}\_L$ & 3     & 3     & 5     & 7     & 2     & 2     & 2     & 2     & 2     & 1     & 2     & 3     & 2     & 2     & 3 \\
          &       & CAB   & 0     & 0     & 0     & 0     & 0     & 1     & 0     & 1     & 1     & 0     & 2     & 2     & 2     & 2     & 2 \\
          &       & TR    & 3     & 3     & 3     & 2     & 4     & 4     & 3     & 3     & 4     & 2     & 7     & 8     & 6     & 7     & 6 \\
    \hline
    \multicolumn{1}{r}{\multirow{6}[6]{*}{15}} & \multirow{2}[2]{*}{0.2} & CAB   & 1     & 1     & 1     & 1     & 1     & 1     & 1     & 1     & 1     & 1     & 1     & 1     & 1     & 1     & 1 \\
          &       & TR    & 9     & 6     & 8     & 6     & 7     & 22    & 17    & 25    & 40    & 17    & 42    & 26    & 31    & 40    & 24 \\
\cline{2-18}          & \multirow{2}[2]{*}{0.5} & CAB   & 1     & 1     & 1     & 1     & 1     & 1     & 1     & 2     & 2     & 1     & 3     & 4     & 11    & 3     & 1 \\
          &       & TR    & 31    & 13    & 19    & 26    & 17    & 18    & 23    & 23    & 19    & 18    & 35    & 36    & 31    & 33    & 24 \\
\cline{2-18}          & \multirow{2}[2]{*}{0.8} & CAB   & 1     & 1     & 1     & 1     & 1     & 3     & 2     & 3     & 3     & 3     & 12    & 10    & 12    & 12    & 10 \\
          &       & TR    & 23    & 18    & 16    & 24    & 15    & 17    & 20    & 17    & 14    & 16    & 26    & 23    & 25    & 25    & 17 \\
    \hline
    \multirow{12}[6]{*}{20} & \multirow{4}[2]{*}{0.2} & ${\rm AP}\_T$ & 56    & 42    & 52    & 58    & 35    & 133   & 93    & 132   & 112   & 88    & 134   & 119   & 95    & 98    & 113 \\
          &       & ${\rm AP}\_L$ & 95    & 69    & 99    & 179   & 49    & 102   & 137   & 90    & 81    & 105   & 111   & 97    & 98    & 163   & 137 \\
          &       & CAB   & 1     & 1     & 1     & 1     & 1     & 1     & 2     & 1     & 1     & 1     & 2     & 2     & 1     & 2     & 1 \\
          &       & TR    & 25    & 26    & 23    & 17    & 15    & 98    & 66    & 77    & 50    & 121   & 139   & 133   & 127   & 130   & 95 \\
\cline{2-18}          & \multirow{4}[2]{*}{0.5} & ${\rm AP}\_T$ & 51    & 55    & 61    & 40    & 42    & 119   & 109   & 134   & 73    & 99    & 114   & 111   & 121   & 201   & 81 \\
          &       & ${\rm AP}\_L$ & 101   & 60    & 52    & 36    & 51    & 101   & 82    & 71    & 90    & 117   & 81    & 140   & 89    & 130   & 78 \\
          &       & CAB   & 2     & 2     & 2     & 1     & 1     & 2     & 2     & 2     & 1     & 1     & 2     & 2     & 4     & 2     & 1 \\
          &       & TR    & 46    & 43    & 44    & 28    & 70    & 91    & 81    & 58    & 154   & 98    & 198   & 122   & 141   & 116   & 150 \\
\cline{2-18}          & \multirow{4}[2]{*}{0.8} & ${\rm AP}\_T$ & 41    & 54    & 49    & 27    & 59    & 106   & 91    & 137   & 112   & 114   & 81    & 146   & 97    & 168   & 105 \\
          &       & ${\rm AP}\_L$ & 44    & 77    & 41    & 46    & 47    & 74    & 109   & 90    & 78    & 81    & 95    & 112   & 102   & 103   & 105 \\
          &       & CAB   & 1     & 2     & 1     & 1     & 1     & 1     & 3     & 1     & 1     & 1     & 27    & 40    & 57    & 21    & 21 \\
          &       & TR    & 49    & 42    & 46    & 64    & 34    & 101   & 82    & 52    & 132   & 76    & 168   & 144   & 117   & 163   & 239 \\
    \hline
    \multirow{12}[5]{*}{25} & \multirow{4}[2]{*}{0.2} & ${\rm AP}\_T$ & 301   & 242   & 238   & 111   & 144   & 179   & 205   & 161   & 182   & 251   & 481   & 550   & 412   & 402   & 523 \\
          &       & ${\rm AP}\_L$ & 205   & 246   & 249   & 228   & 105   & 360   & 434   & 356   & 451   & 289   & 628   & 723   & 704   & 537   & 596 \\
          &       & CAB   & 4     & 4     & 4     & 4     & 4     & 4     & 4     & 4     & 4     & 4     & 4     & 8     & 4     & 4     & 4 \\
          &       & TR    & 316   & 173   & 246   & 204   & 304   & 608   & 545   & 557   & 298   & 486   & 1364  & 1383  & 1018  & 1544  & 922 \\
\cline{2-18}          & \multirow{4}[2]{*}{0.5} & ${\rm AP}\_T$ & 187   & 179   & 165   & 108   & 64    & 163   & 177   & 167   & 151   & 150   & 330   & 386   & 277   & 326   & 421 \\
          &       & ${\rm AP}\_L$ & 178   & 403   & 122   & 329   & 250   & 387   & 341   & 329   & 529   & 338   & 543   & 674   & 490   & 676   & 738 \\
          &       & CAB   & 4     & 4     & 4     & 5     & 4     & 4     & 4     & 5     & 4     & 4     & 135   & 220   & 236   & 71    & 11 \\
          &       & TR    & 458   & 425   & 282   & 410   & 596   & 790   & 749   & 974   & 558   & 1003  & 1795  & 1650  & 1376  & 1016  & 2003 \\
\cline{2-18}          & \multirow{4}[1]{*}{0.8} & ${\rm AP}\_T$ & 180   & 121   & 142   & 165   & 40    & 142   & 158   & 141   & 124   & 139   & 266   & 369   & 338   & 303   & 365 \\
          &       & ${\rm AP}\_L$ & 125   & 87    & 153   & 86    & 130   & 271   & 258   & 296   & 181   & 283   & 589   & 383   & 370   & 414   & 396 \\
          &       & CAB   & 3     & 10    & 11    & 4     & 4     & 4     & 8     & 7     & 4     & 4     & 90    & 255   & 150   & 202   & 131 \\
          &       & TR    & 396   & 421   & 348   & 216   & 499   & 848   & 1250  & 1036  & 606   & 928   & 1870  & 1593  & 1599  & 2062  & 2323 \\
    \end{tabular}}%
\vspace*{0.25cm}\\
   \caption{Average CPU times for  (HLPLF-$\lambda$).}
  \label{tab:m2}
\end{table}%

Comparing Table \ref{tab:m1}  with Table  \ref{tab:m2} we observe that  (HLPLF-$\lambda$) is \textit{notably} easier to solve than  (HLPLF-1BP), which can be explained by its smaller number of decision variables.  The difficulty of (HLPLF-$\lambda$) increases with the value of $\lambda$, as reflected by a decrease in its performance for higher values of this parameter. This could be expected, as (HLPLF-$\lambda$) becomes  more restrictive as the value of $\lambda$ increases.
When $\lambda=2$, the average computing time over all the instances is approximately  64 seconds, being  two seconds for the CAB instances, 70 seconds for the AP$_T$ instances, 89 seconds  for the AP$_L$ instances, and 102 for the TR instances.  Note that, unlike  (HLPLF-1BP), CAB instances are less computationally demanding than AP and TR instances. This behavior was also observed for $\lambda = 3$ and $\lambda = 4$. For $\lambda=4$ the average computing time over all the instances is approximately  218 seconds, being  30 seconds for the CAB instances, 168 seconds for the AP$_T$ instances, 225 seconds  for the AP$_L$ instances, and 438 for the TR instances.
 Observe that the value of $\alpha$ also affects the performance of  (HLPLF-$\lambda$),  instances being more difficult for smaller $\alpha$ values, specially for the AP instances.
We also note  that, unlike  (HLPLF-1BP), with  (HLPLF-$\lambda$) there seem to be no noticeably differences among scenarios.

\begin{table}%[htbp]
  \centering\scriptsize
    \adjustbox{scale=0.8}{\begin{tabular}{cccc|rrrrr|rrrrr|rrrrr}
          &       &       &       & \multicolumn{5}{c|}{CPUTime }         & \multicolumn{5}{c|}{MIPGap }          & \multicolumn{5}{c}{\%Solved} \\
          &       &       &       & \multicolumn{2}{c}{RP} &       & \multicolumn{2}{c|}{SP} & \multicolumn{2}{c}{RP} &       & \multicolumn{2}{c|}{SP} & \multicolumn{2}{c}{RP} &       & \multicolumn{2}{c}{SP} \\
\cline{5-6}\cline{8-11}\cline{13-16}\cline{18-19}          & \multicolumn{1}{l}{ n } &  $\alpha$  & \multicolumn{1}{l|}{Data} & \multicolumn{1}{c}{0.1} & \multicolumn{1}{c}{0.3} & \multicolumn{1}{c}{CP} & \multicolumn{1}{c}{0.1} & \multicolumn{1}{c|}{0.3} & \multicolumn{1}{c}{0.1} & \multicolumn{1}{c}{0.3} & \multicolumn{1}{c}{CP} & \multicolumn{1}{c}{0.1} & \multicolumn{1}{c|}{0.3} & \multicolumn{1}{c}{0.1} & \multicolumn{1}{c}{0.3} & \multicolumn{1}{c}{CP} & \multicolumn{1}{c}{0.1} & \multicolumn{1}{c}{0.3} \\
    \hline
    \multirow{18}[12]{*}{$\lambda=2$} & \multirow{9}[6]{*}{40} & \multirow{3}[2]{*}{0.2} & ${\rm AP}_T$ & 371   & 363   & 382   & 361   & 310   & 0.00  & 0.00  & 0.00  & 0.00  & 0.00  & 100   & 100   & 100   & 100   & 100 \\
          &       &       & ${\rm AP}_L$ & 1608  & 1689  & 5418  & 1181  & 1059  & 0.00  & 0.00  & 4.43  & 0.00  & 0.00  & 100   & 100   & 60    & 100   & 100 \\
          &       &       & TR    & 870   & 775   & 773   & 666   & 537   & 0.00  & 0.00  & 0.00  & 0.00  & 0.00  & 100     & 100     & 100     & 100     & 100 \\
\cline{3-19}          &       & \multirow{3}[2]{*}{0.5} & ${\rm AP}_T$ & 344   & 479   & 313   & 322   & 432   & 0.00  & 0.00  & 0.00  & 0.00  & 0.00  & 100   & 100   & 100   & 100   & 100 \\
          &       &       & ${\rm AP}_L$ & 3579  & 3155  & 2487  & 2602  & 1351  & 0.00  & 3.16  & 0.00  & 0.00  & 0.00  & 100   & 80    & 100   & 100   & 100 \\
          &       &       & TR    & 434   & 600   & 601   & 337   & 414   & 0.00  & 0.00  & 0.00  & 0.00  & 0.00  & 100     & 100     & 100     & 100     & 100 \\
\cline{3-19}          &       & \multirow{3}[2]{*}{0.8} & ${\rm AP}_T$ & 331   & 376   & 325   & 303   & 337   & 0.00  & 0.00  & 0.00  & 0.00  & 0.00  & 100   & 100   & 100   & 100   & 100 \\
          &       &       & ${\rm AP}_L$ & 2067  & 1663  & 2126  & 1604  & 692   & 0.00  & 0.00  & 0.00  & 0.00  & 0.00  & 100   & 100   & 100   & 100   & 100 \\
          &       &       & TR    & 499   & 489   & 430   & 512   & 390   & 0.00  & 0.00  & 0.00  & 0.00  & 0.00  & 100     & 100     & 100     & 100     & 100 \\
\cline{2-19}          & \multirow{9}[6]{*}{50} & \multirow{3}[2]{*}{0.2} & ${\rm AP}_T$ & 3986  & 3470  & 5128  & 4147  & 3651  & 0.00  & 0.00  & 2.46  & 0.00  & 0.00  & 100   & 100   & 80    & 100   & 100 \\
          &       &       & ${\rm AP}_L$ & 6427  & 5362  & 6472  & 3273  & \texttt{TL}   & 13.79 & 6.56  & 12.40 & 0.00  & 15.64 & 20    & 60    & 40    & 100   & 0 \\
          &       &       & TR    & 2877  & 2520  & 3385  & 2180  & 3817  & 0.00  & 0.00  & 0.00  & 0.00  & 0.00  & 100     & 100     & 100     & 100     & 100 \\
\cline{3-19}          &       & \multirow{3}[2]{*}{0.5} & ${\rm AP}_T$ & 2635  & 3789  & 4319  & 4823  & 1590  & 0.00  & 0.00  & 2.02  & 0.00  & 0.00  & 100   & 100   & 80    & 100   & 100 \\
          &       &       & ${\rm AP}_L$ & 3819  & 3406  & \texttt{TL}   & 2761  & 2131  & 0.00  & 3.45  & 17.09 & 0.00  & 0.00  & 100   & 80    & 0     & 100   & 100 \\
          &       &       & TR    & 1296  & 3037  & 3264  & 1201  & 1349  & 0.00  & 0.00  & 0.00  & 0.00  & 0.00  & 100     & 100     & 100     & 100     & 100 \\
\cline{3-19}          &       & \multirow{3}[2]{*}{0.8} & ${\rm AP}_T$ & 2862  & 3732  & 2836  & 3736  & 2474  & 0.00  & 0.00  & 0.00  & 0.00  & 0.00  & 100   & 100   & 100   & 100   & 100 \\
          &       &       & ${\rm AP}_L$ & 5872  & 5151  & 7183  & 6279  & 1172  & 9.93  & 2.99  & 14.02 & 0.00  & 0.00  & 40    & 80    & 20    & 100   & 100 \\
          &       &       & TR    & 1010  & 1495  & 1356  & 852   & 1050  & 0.00  & 0.00  & 0.00  & 0.00  & 0.00  & 100     & 100     & 100     & 100     & 100 \\
    \hline
    \multirow{18}[12]{*}{$\lambda=4$} & \multirow{9}[6]{*}{40} & \multirow{3}[2]{*}{0.2} & ${\rm AP}_T$ & 2919  & 2907  & 2300  & 2582  & 6060  & 0.00  & 0.00  & 0.00  & 0.00  & 0.00  & 100   & 100   & 100   & 100   & 100 \\
          &       &       & ${\rm AP}_L$ & \texttt{TL}   & \texttt{TL}   & 5643  & 5427  & 5674  & 49.57 & 50.23 & 19.51 & 0.00  & 0.00  & 0     & 0     & 60    & 100   & 100 \\
          &       &       & TR    & 6958  & 7044  & 6002  & \texttt{TL}   & \texttt{TL}   & 13.64 & 9.16  & 5.89  & 7.14  & 10.61 & 20   & 20   & 60   & 0     & 0 \\
\cline{3-19}          &       & \multirow{3}[2]{*}{0.5} & ${\rm AP}_T$ & 2807  & 2335  & 2293  & 2186  & 2939  & 0.00  & 0.00  & 0.00  & 0.00  & 0.00  & 100   & 100   & 100   & 100   & 100 \\
          &       &       & ${\rm AP}_L$ & 5589  & 4865  & 3875  & 5535  & 2960  & 10.03 & 10.03 & 0.00  & 0.00  & 0.00  & 80    & 80    & 100   & 100   & 100 \\
          &       &       & TR    & \texttt{TL}   & 7030  & 7024  & \texttt{TL}   & \texttt{TL}   & 20.15 & 14.00 & 12.62 & 22.24 & 13.99 & 0     & 20   & 20   & 0     & 0 \\
\cline{3-19}          &       & \multirow{3}[2]{*}{0.8} & ${\rm AP}_T$ & 2315  & 2400  & 2272  & 1640  & 1750  & 0.00  & 0.00  & 0.00  & 0.00  & 0.00  & 100   & 100   & 100   & 100   & 100 \\
          &       &       & ${\rm AP}_L$ & 4959  & 5691  & 3512  & 6253  & 2767  & 19.83 & 9.12  & 0.00  & 0.00  & 0.00  & 60    & 80    & 100   & 100   & 100 \\
          &       &       & TR    & \texttt{TL}   & 6991  & 7139  & \texttt{TL}   & \texttt{TL}   & 17.73 & 13.56 & 12.25 & 17.51 & 14.20 & 0     & 20   & 20   & 0     & 0 \\
\cline{2-19}          & \multirow{9}[6]{*}{50} & \multirow{3}[2]{*}{0.2} & ${\rm AP}_T$ & 7089  & 7198  & 6966  & 7134  & 6004  & 30.03 & 40.96 & 27.71 & 0.00  & 0.00  & 40    & 20    & 40    & 100   & 100 \\
          &       &       & ${\rm AP}_L$ & \texttt{TL}   & \texttt{TL}   & \texttt{TL}   & \texttt{TL}   & \texttt{TL}   & 49.99 & 49.23 & 50.31 & 48.24 & 46.73 & 0     & 0     & 0     & 0     & 0 \\
          &       &       & TR    & \texttt{TL}   & \texttt{TL}   & \texttt{TL}   & \texttt{TL}   & \texttt{TL}   & 17.12 & 13.76 & 16.23 & 17.77 & 16.54 & 0     & 0     & 0     & 0     & 0 \\
\cline{3-19}          &       & \multirow{3}[2]{*}{0.5} & ${\rm AP}_T$ & 6482  & 6995  & 6711  & 5310  & \texttt{TL}   & 20.67 & 39.79 & 25.85 & 0.00  & 49.76 & 60    & 20    & 40    & 100   & 0 \\
          &       &       & ${\rm AP}_L$ & \texttt{TL}   & \texttt{TL}   & \texttt{TL}   & \texttt{TL}   & \texttt{TL}   & 49.68 & 48.38 & 50.03 & 48.53 & 48.19 & 0     & 0     & 0     & 0     & 0 \\
          &       &       & TR    & \texttt{TL}   & \texttt{TL}   & \texttt{TL}   & \texttt{TL}   & \texttt{TL}   & 24.59 & 16.95 & 17.05 & 22.29 & 20.88 & 0     & 0     & 0     & 0     & 0 \\
\cline{3-19}          &       & \multirow{3}[2]{*}{0.8} & ${\rm AP}_T$ & 6933  & 7081  & 6259  & 7202  & 7201  & 28.66 & 39.67 & 0.00  & 29.26 & 46.94 & 40    & 20    & 100   & 0     & 0 \\
          &       &       & ${\rm AP}_L$ & \texttt{TL}   & \texttt{TL}   & \texttt{TL}   & \texttt{TL}   & \texttt{TL}   & 48.91 & 47.81 & 47.93 & 48.24 & 46.94 & 0     & 0     & 0     & 0     & 0 \\
          &       &       & TR    & \texttt{TL}   & \texttt{TL}   & \texttt{TL}   & \texttt{TL}   & \texttt{TL}   & 25.94 & 21.03 & 22.78 & 27.43 & 23.51 & 0     & 0     & 0     & 0     & 0 \\
  %  \bottomrule
    \end{tabular}}%
    \vspace*{0.25cm}
  \caption{Average results for  (HLPLF-$\lambda$) for $n\geq 40$.}
  \label{tab:m240}
\end{table}%

Finally, Table \ref{tab:m240} summarizes the results of our second {set of computational experiments}, which {was} carried out considering only   (HLPLF-$\lambda$)   for $\lambda \in\{2,4\}$) and $\beta=1$ on larger instances ($n \in \{ 40, 50\}$)  based on the AP and TR {datasets}. {In this second set of experiments we did not consider  (HLPLF-1BP) as most instances could not be optimally solved within the time limit already for $n=25$.}

 We can observe that, for $\lambda=2$, all the TR instances, 99\% of  the ${\rm AP}_T$ instances, and 80\% of  the $AP_L $ instances have been solved {to proven optimality} within the time limit, whereas for $\lambda=4$  the  percentage of solved instances was $70\%$ of  the ${\rm AP}_T$ instances, 40\% of  the $AP_L $ instances,  and 6\% of the TR instances. This shows that  (HLPLF-$\lambda$) is able to solve {larger} instances with up to $n=50$, {even if  instances become more challenging as  the value of the parameter $\lambda$  increases.}

\subsection{Managerial Insight \label{sec:comput_manag}}
In this section we derive some managerial insight from the results obtained in our first set of experiments, i.e., $n\leq 25$ when the instances were solved with both formulations, as well from the solutions of these instances for M0 (the uncapacitated HLP with no protection under failures). Figure \ref{fig:cost}  shows {the percentage contribution to the objective function value of the different types of costs:} routing costs, {set-up} costs for activating hubs (Hubs\_Costs) and {set-up} costs for activating inter-hub edges (Links\_Costs).

We have observed that results are similar for AP$_L$ and AP$_T$ data sets and thus, for each {formulation,} we differentiate between datasets CAB, AP and TR, as well as {among} the three values of the $\alpha$ parameter.

\begin{figure}[h]
%\begin{center}
%\hspace*{-0.5cm}
\centering\includegraphics[scale=0.55]{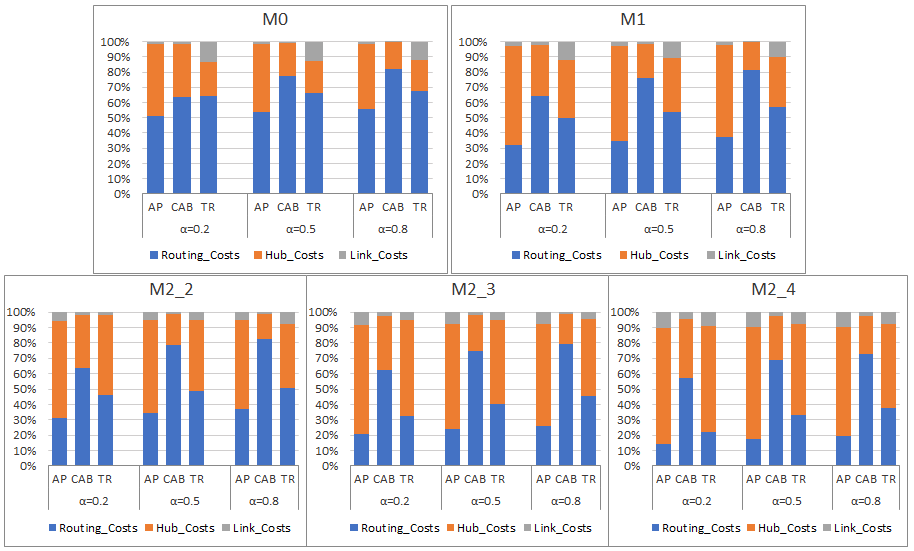}
%\caption{Costs distributions \label{fig:cost}}
\caption{{Percent contribution to the objective function of the different types of costs}
 \label{fig:cost}}
%\end{center}
\end{figure}

We can observe that the percent contribution of the set-up costs for activating inter-hubs edges varies from $0.1\%$ for CAB instances to $13\%$ for TR instances in M0. The {percent contribution} of  {hub set-up} costs {depends},  as expected, on the value of the parameter $\alpha$ but mainly on the dataset and on the model.  For the instances based on the CAB dataset, the percent contribution of hub set-up costs varies from  $20\%$  with  M0,  M1, and  M2\_2  {for}  $\alpha=0.8$, to  $40\%$ with  M2\_4 {for} $\alpha=0.2$. For the instances based on the AP {dataset,  the percent contribution of hub set-up  costs varies from $45\%$ with M0 and  $\alpha\in\{0.5, 0.8\}$ to $75\%$ with M2\_4. Regarding the instances based on the TR dataset, the percent contribution of hub set-up  costs varies from $21\%$ with M0 and  $\alpha\in\{0.5, 0.8\}$ to $69\%$ with M2\_4 for $\alpha=0.2$. The {percent contribution} of  routing costs  also {depends} on the value of the parameter $\alpha$, on the dataset, and on the model.
For the instances based on the CAB dataset, {this percentage  {varies} from  $55\%$ to  $80\%$, {with} the highest values  {for} M0, M1, and M2\_2  {and} $\alpha=0.8$.
For the instances based on the AP {dataset, the percent contribution of} routing costs {varies} from  $15\%$ to  $55\%$, {corresponding} the highest values  {to} M0 with $\alpha=0.8$. Finally,
for the instances based on the TR dataset, the percent contribution of routing costs varies from $22\%$ to 68\%, corresponding again the highest values to M0 with $\alpha=0.8$.
\begin{table}[htbp]
   \scriptsize\centering
    \adjustbox{scale=0.8}{\begin{tabular}{ccc|rrr|rrr|rrr|rrr|rrr}
          &       &       & \multicolumn{3}{c|}{M0} & \multicolumn{3}{c|}{M1} & \multicolumn{3}{c|}{M2\_2} & \multicolumn{3}{c|}{M2\_3} & \multicolumn{3}{c}{M2\_4} \\
     n    &  $\alpha$  & Data  & \multicolumn{1}{l}{\# H} & \multicolumn{1}{l}{\# Lk} & \multicolumn{1}{l|}{\# Lp} & \multicolumn{1}{l}{\# H} & \multicolumn{1}{l}{\# Lk} & \multicolumn{1}{l|}{\# Lp} & \multicolumn{1}{l}{\# H} & \multicolumn{1}{l}{\# Lk} & \multicolumn{1}{l|}{\# Lp} & \multicolumn{1}{l}{\# H} & \multicolumn{1}{l}{\# Lk} & \multicolumn{1}{l|}{\# Lp} & \multicolumn{1}{l}{\# H} & \multicolumn{1}{l}{\# Lk} & \multicolumn{1}{l}{\# Lp} \\
   \hline
    \multirow{9}[6]{*}{10} & \multirow{3}[2]{*}{0.2} & AP    & 1.00  & 1.00  & 1.00  & 2.00  & 2.00  & 2.00  & 2.00  & 3.00  & 2.00  & 3.00  & 6.00  & 3.00  & 4.00  & 10.00 & 4.00 \\
          &       & CAB   & 2.00  & 3.00  & 2.00  & 2.94  & 5.47  & 2.59  & 2.53  & 4.12  & 2.12  & 3.00  & 6.00  & 3.00  & 4.00  & 10.00 & 4.00 \\
          &       & TR    & 1.00  & 1.00  & 1.00  & 2.00  & 2.00  & 1.00  & 3.00  & 3.06  & 0.06  & 4.00  & 6.00  & 0.00  & 5.00  & 10.00 & 0.00 \\
\cline{2-18}          & \multirow{3}[2]{*}{0.5} & AP    & 1.00  & 1.00  & 1.00  & 2.00  & 2.00  & 2.00  & 2.00  & 3.00  & 2.00  & 3.00  & 6.00  & 3.00  & 4.00  & 10.00 & 4.00 \\
          &       & CAB   & 2.00  & 3.00  & 2.00  & 2.35  & 3.71  & 2.35  & 2.03  & 3.09  & 2.03  & 3.00  & 6.00  & 3.00  & 4.00  & 10.00 & 4.00 \\
          &       & TR    & 1.00  & 1.00  & 1.00  & 2.00  & 2.00  & 1.00  & 3.00  & 3.00  & 0.00  & 4.00  & 6.00  & 0.00  & 5.00  & 10.00 & 0.00 \\
\cline{2-18}          & \multirow{3}[2]{*}{0.8} & AP    & 1.00  & 1.00  & 1.00  & 2.00  & 2.00  & 2.00  & 2.00  & 3.00  & 2.00  & 3.00  & 6.00  & 3.00  & 4.00  & 10.00 & 4.00 \\
          &       & CAB   & 2.00  & 2.00  & 2.00  & 2.29  & 2.41  & 2.29  & 2.00  & 3.00  & 2.00  & 3.00  & 6.00  & 3.00  & 4.00  & 10.00 & 4.00 \\
          &       & TR    & 1.00  & 1.00  & 1.00  & 2.00  & 2.00  & 1.00  & 3.00  & 3.65  & 0.65  & 4.00  & 6.12  & 0.12  & 5.00  & 10.00 & 0.00 \\
   \hline
    \multirow{6}[6]{*}{15} & \multirow{2}[2]{*}{0.2} & CAB   & 4.00  & 6.00  & 2.00  & 4.18  & 8.06  & 2.94  & 4.24  & 7.56  & 2.71  & 4.21  & 8.59  & 2.88  & 4.71  & 11.41 & 3.29 \\
          &       & TR    & 1.00  & 1.00  & 1.00  & 2.00  & 2.00  & 1.00  & 3.00  & 3.53  & 0.53  & 4.00  & 6.00  & 0.00  & 5.00  & 10.00 & 0.00 \\
\cline{2-18}          & \multirow{2}[2]{*}{0.5} & CAB   & 2.00  & 3.00  & 2.00  & 2.71  & 4.47  & 2.71  & 2.15  & 3.29  & 2.03  & 3.03  & 6.06  & 3.00  & 4.00  & 10.00 & 4.00 \\
          &       & TR    & 1.00  & 1.00  & 1.00  & 2.00  & 2.00  & 1.00  & 3.00  & 3.12  & 0.12  & 4.00  & 6.00  & 0.00  & 5.00  & 10.00 & 0.00 \\
\cline{2-18}          & \multirow{2}[2]{*}{0.8} & CAB   & 2.00  & 2.00  & 2.00  & 2.18  & 2.65  & 2.18  & 2.00  & 3.00  & 2.00  & 3.00  & 6.00  & 3.00  & 4.00  & 10.00 & 4.00 \\
          &       & TR    & 1.00  & 1.00  & 1.00  & 2.00  & 2.00  & 1.00  & 3.00  & 3.41  & 0.41  & 4.00  & 6.00  & 0.00  & 5.00  & 10.00 & 0.00 \\
   \hline
    \multirow{9}[6]{*}{20} & \multirow{3}[2]{*}{0.2} & AP    & 1.00  & 1.00  & 1.00  & 2.00  & 2.26  & 1.50  & 2.00  & 3.00  & 2.00  & 3.00  & 6.00  & 3.00  & 4.00  & 10.00 & 4.00 \\
          &       & CAB   & 5.00  & 13.00 & 5.00  & 4.29  & 10.88 & 4.24  & 4.76  & 12.12 & 4.59  & 4.76  & 12.29 & 4.74  & 4.74  & 12.94 & 4.74 \\
          &       & TR    & 1.00  & 1.00  & 1.00  & 2.00  & 2.00  & 1.00  & 3.00  & 3.00  & 0.00  & 4.00  & 6.00  & 0.00  & 5.00  & 10.00 & 0.00 \\
\cline{2-18}          & \multirow{3}[2]{*}{0.5} & AP    & 1.00  & 1.00  & 1.00  & 2.00  & 2.21  & 2.00  & 2.00  & 3.00  & 2.00  & 3.00  & 6.00  & 3.00  & 4.00  & 10.00 & 4.00 \\
          &       & CAB   & 4.00  & 9.00  & 4.00  & 3.76  & 8.94  & 3.76  & 3.76  & 8.47  & 3.76  & 3.76  & 8.53  & 3.76  & 4.00  & 10.00 & 4.00 \\
          &       & TR    & 1.00  & 1.00  & 1.00  & 2.00  & 2.00  & 1.00  & 3.00  & 3.41  & 0.41  & 4.00  & 6.06  & 0.06  & 5.00  & 10.00 & 0.00 \\
\cline{2-18}          & \multirow{3}[2]{*}{0.8} & AP    & 1.00  & 1.00  & 1.00  & 2.00  & 2.12  & 2.00  & 2.00  & 3.00  & 2.00  & 3.00  & 6.00  & 3.00  & 4.00  & 10.00 & 4.00 \\
          &       & CAB   & 3.00  & 5.00  & 3.00  & 3.18  & 5.41  & 3.18  & 2.97  & 4.94  & 2.97  & 3.00  & 6.00  & 3.00  & 4.00  & 10.00 & 4.00 \\
          &       & TR    & 1.00  & 1.00  & 1.00  & 2.00  & 2.00  & 1.00  & 3.00  & 3.76  & 0.76  & 4.00  & 6.06  & 0.06  & 5.00  & 10.06 & 0.06 \\
   \hline
    \multirow{9}[5]{*}{25} & \multirow{3}[2]{*}{0.2} & AP    & 1.00  & 1.00  & 1.00  & 2.00  & 2.18  & 1.82  & 2.00  & 3.00  & 2.00  & 3.00  & 6.00  & 3.00  & 4.00  & 10.00 & 4.00 \\
          &       & CAB   & 4.00  & 10.00 & 4.00  & 3.75  & 8.56  & 3.75  & 4.03  & 9.88  & 4.03  & 4.03  & 9.88  & 4.03  & 4.03  & 10.12 & 4.03 \\
          &       & TR    & 1.00  & 1.00  & 1.00  & 2.06  & 2.06  & 0.94  & 3.00  & 3.76  & 0.76  & 4.00  & 6.00  & 0.00  & 5.00  & 10.00 & 0.00 \\
\cline{2-18}          & \multirow{3}[2]{*}{0.5} & AP    & 1.00  & 1.00  & 1.00  & 2.00  & 2.18  & 1.94  & 2.00  & 3.00  & 2.00  & 3.00  & 6.00  & 3.00  & 4.00  & 10.00 & 4.00 \\
          &       & CAB   & 3.00  & 6.00  & 3.00  & 3.62  & 7.62  & 3.62  & 3.00  & 5.94  & 3.00  & 3.00  & 6.00  & 3.00  & 4.00  & 10.00 & 4.00 \\
          &       & TR    & 1.00  & 1.00  & 1.00  & 2.00  & 2.00  & 1.00  & 3.00  & 4.00  & 1.00  & 4.00  & 6.65  & 0.65  & 5.00  & 11.00 & 1.00 \\
\cline{2-18}          & \multirow{3}[1]{*}{0.8} & AP    & 1.00  & 1.00  & 1.00  & 2.00  & 2.15  & 1.97  & 2.00  & 3.00  & 2.00  & 3.00  & 6.00  & 3.00  & 4.00  & 10.00 & 4.00 \\
          &       & CAB   & 3.00  & 5.00  & 3.00  & 3.46  & 6.15  & 3.46  & 2.94  & 4.91  & 2.94  & 3.00  & 6.00  & 3.00  & 4.00  & 10.00 & 4.00 \\
          &       & TR    & 1.00  & 1.00  & 1.00  & 2.00  & 2.00  & 1.00  & 3.00  & 4.00  & 1.00  & 4.00  & 7.00  & 1.00  & 5.00  & 11.00 & 1.00 \\
    \end{tabular}}%
    \vspace*{0.25cm}
   \caption{Average number of open hubs, links and loops}
        \label{tab:nhub}%
\end{table}%

%\newpage

Information about the structure of the optimal backbone network can be found in Table \ref{tab:nhub}, {and Figures \ref{fig:densCAB}, \ref{fig:densAP} and \ref{fig:densTR}}.

\begin{figure}%[h]
\begin{center}
\includegraphics[scale=0.55]{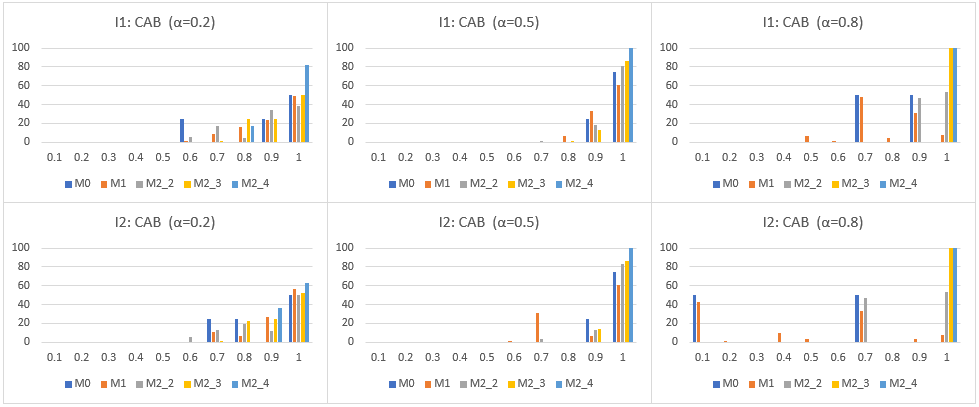}
\caption{Backbone network density for {CAB-based instances} \label{fig:densCAB}}
\end{center}
\end{figure}

\begin{figure}%[h]
\begin{center}
\includegraphics[scale=0.55]{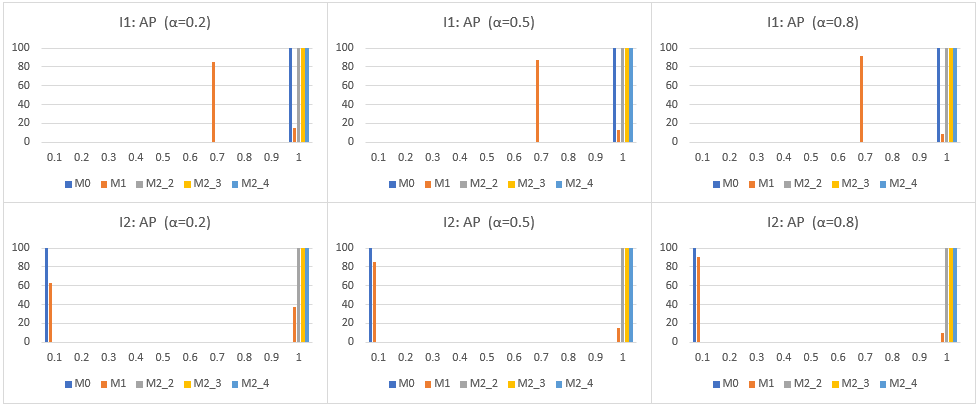}
\caption{Backbone network density for {AP-based instances} \label{fig:densAP}}
\end{center}
\end{figure}

\begin{figure}%[h]
\begin{center}
\includegraphics[scale=0.55]{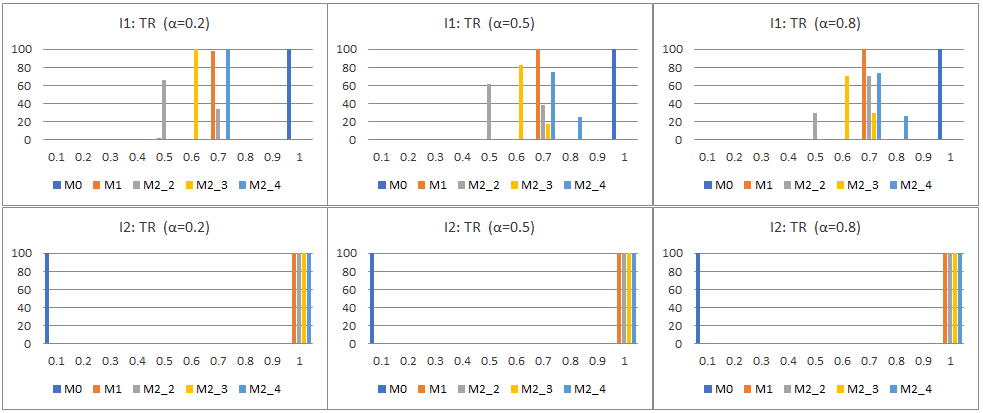}
\caption{Backbone network density for {TR-based instances} \label{fig:densTR}}
\end{center}
\end{figure}

In Table  \ref{tab:nhub}, `` $\#$H'',  ``$\#$Lk'' and ``$\#$Lp'' stand for the average number {of hubs} activated  in the optimal backbone network, the average number of  activated inter-hub edges (including loops), and the average number of activated {loops}, respectively. Two density indices have been studied.
While the first index ($I_1$) indicates the density of the backbone network including loops, the second one ($I_2$) indicates the density of the backbone network when loops are not considered.
 {That is, $I_1$ is} the ratio between the number of activated inter-hub edges and the {total} number of links in a complete graph with $\#H$ nodes if loops were included:
$$I_1=\frac{2 \# Lk}{\#H (\#H+1)},$$
and {$I_2$}  the ratio between the number of {non-loop  inter-hub edges activated} and the number of links in a complete graph with $\#H$ nodes if loops were excluded:
$$I_2=\frac{2 (\# Lk-\#Lp)}{\#H (\#H-1)}.$$
Note that values of the two indices {range in $[0, 1]$, i.e., $0\leq I_1, I_2\leq 1$}.  {Figures \ref{fig:densCAB}, \ref{fig:densAP} and \ref{fig:densTR} show, for each formulation and value of the parameter $\alpha$, the percentage of instances that reach the corresponding index value, for the CAB, the AP and the TR instances, respectively.}

We can observe in Table \ref{tab:nhub} that the number of open hubs depends mainly on the model and also on the dataset. For all AP instances  the number of opens hubs is always {one for M0, two for M1  and M2\_2, three  for  M2\_3,  and four for M2\_4}.   For CAB instances this number ranges in average in [2, 5] for M0, in [2.18, 4.29] for M1, in  [2, 4.76] for M2\_2, in [3, 4.76] for M2\_3, and in [4, 4.74] for M2\_4. For TR instances, the number of opens hubs is always one for M0, two for M1, three for  M2\_2, four  for  M2\_3,  and five for M2\_4.
 On the other hand, we can observe that the number of activated inter-hubs edges {is smaller for M0 than for M1} and that, {for M1}, this number is similar to {that for} M2\_2 but smaller than {that for} M2\_$\lambda$ for $\lambda>2$, {since, as expected, this number increases} with the value of  $\lambda$. Additionally, we can observe that with M0 most of the activated inter-hubs edges are loops, mainly {with} the AP and the TR instances.  This fact can be also observed in Figure  \ref{fig:densAP} for the AP instances where for M0 and M1  the density index $I_1$  is close to 1, whereas the  density index $I_2$ is close to 0. This indicates that most of the activated inter-hubs edges are loops. Figure \ref{fig:densTR} also shows that for the TR instances and  for M0 the optimal backbone network {has a density index $I_1=1$, but the value  of density index $I_2=0$.  On the other hand, we can observe { in  Figure \ref{fig:densCAB}, Figure \ref{fig:densAP}, and Figure \ref{fig:densTR}}, that the density of the backbone network for {M2\_$\lambda$}  increases, as expected, with the value of the parameter $\lambda$.

\section{The price of robustness \label{sec:comput_simul}}

For assessing the robustness and reliability of the hub network models proposed in this paper, we evaluate the so-called price of robustness (see \cite{bertsimas2004price}), defined as the extra cost incurred to design a robust network. In our case robustness translates into protecting the backbone network under inter-hub edge failures,  which, essentially, is attained by incorporating additional inter-hub edges to the backbone network. We thus start out analysis by comparing the overall set-up cost of the activated inter-hub edges for each of the proposed models M1, {M2\_2, M2\_3, and M2\_4}, with that of the \emph{unprotected} network obtained with M0. This information is summarized in Table \ref{t:PoR}, which, for each of the models M1, {M2\_2, M2\_3, and M2\_4}, gives the average percent deviation of the inter-hub set-up costs with respect those of M0. For the sake of simplicity, in this table we only show the results for the TR dataset, although the behavior of the CAB and AP datasets is similar.

% Table generated by Excel2LaTeX from sheet 'Sheet2'
\begin{table}%[htbp]
  \centering
    \begin{tabular}{c|c|rrrr}
    $n$&  $\alpha$ & M1 & M\_2 & M2\_3 & M2\_4 \\\hline
    \multirow{3}[0]{*}{10} & 0.2   & 68.57\% & 136.86\% & 235.11\% & 361.15\% \\
          & 0.5   & 68.57\% & 134.70\% & 235.11\% & 360.23\% \\
          & 0.8   & 68.57\% & 148.34\% & 240.17\% & 358.39\% \\\hline
    \multirow{3}[0]{*}{15} & 0.2   & 68.57\% & 147.33\% & 233.01\% & 358.39\% \\
          & 0.5   & 68.57\% & 134.45\% & 233.01\% & 358.39\% \\
          & 0.8   & 68.57\% & 143.17\% & 234.23\% & 358.39\% \\\hline
    \multirow{3}[0]{*}{20} & 0.2   & 61.52\% & 98.44\% & 191.38\% & 302.97\% \\
          & 0.5   & 61.93\% & 113.60\% & 184.87\% & 279.68\% \\
          & 0.8   & 61.52\% & 126.21\% & 184.87\% & 280.18\% \\\hline
    \multirow{3}[0]{*}{25} & 0.2   & 65.80\% & 139.29\% & 231.72\% & 347.47\% \\
          & 0.5   & 61.52\% & 134.65\% & 223.84\% & 314.85\% \\
          & 0.8   & 61.52\% & 134.52\% & 219.54\% & 314.85\% \\\hline
    \end{tabular}%
  \caption{Average percent deviation of activated hubs and inter-hub set-up costs relative to those of M0. \label{t:PoR}}
%  \caption{Price of Robustness. \label{t:PoR}}
\end{table}%

As expected, constructing networks that are robust under inter-hub edge failures has a significative impact in the design cost of the network. For the $\lambda$-connected models, the reported percent deviations increase with the value of $\lambda$, which can be easily explained as larger backbone networks are  required as $\lambda$ increases. Nevertheless, as shown by the results of the experiment that we report next, in case of failure, this increase in the design costs strengthens the possibility of being able of re-routing all the commodities of  \emph{a posteriori} solutions (after the occurrence of a failure in the inter-hub edges).

\begin{figure}%[H]
\begin{center}
\includegraphics[scale=0.8]{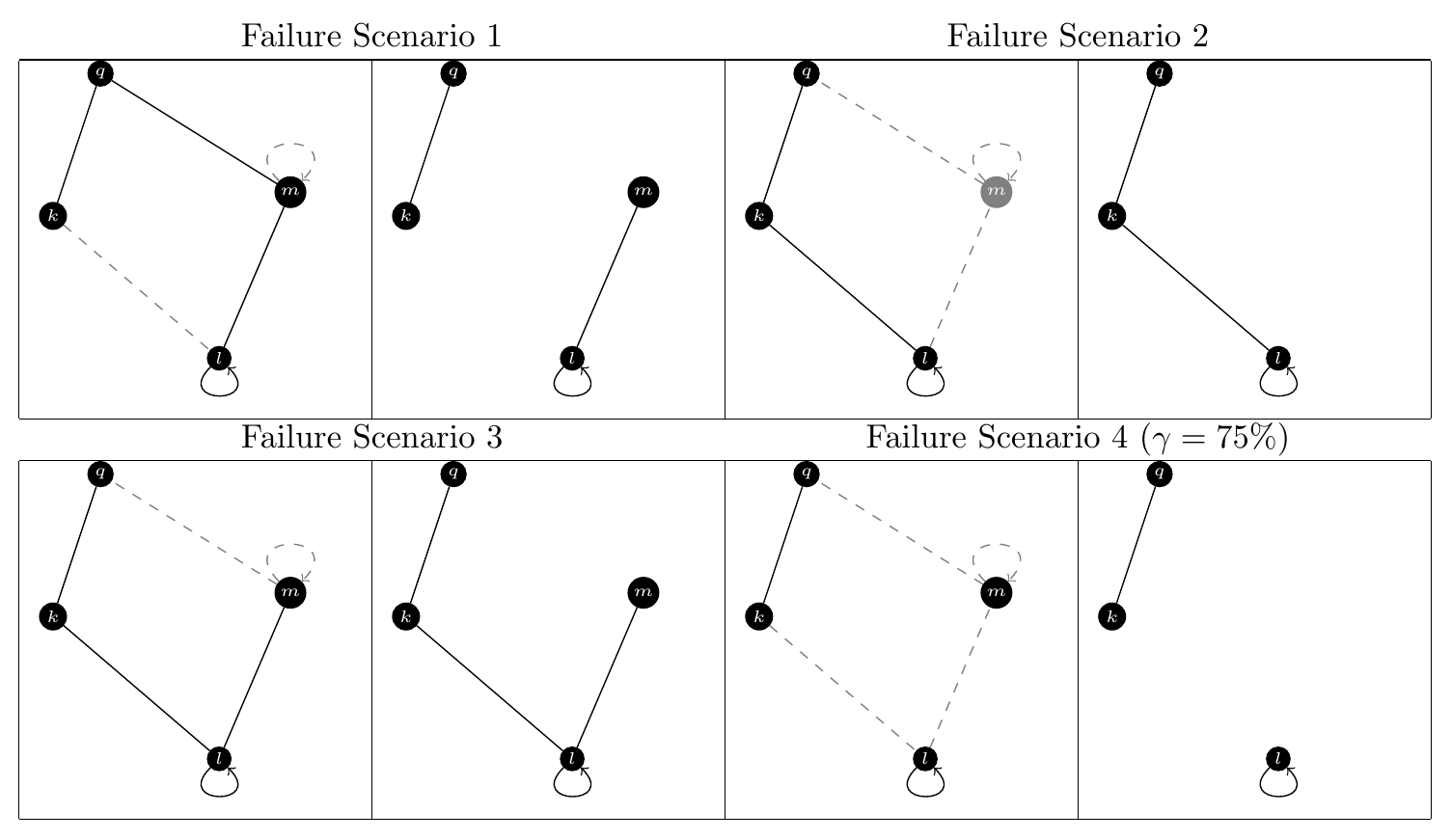}
\caption{Illustration of the four different failure scenarios that we perform.\label{t:simulation}}
\end{center}
\end{figure}

For this experiment, we have used all the $n=20$ instances of the TR dataset, whose optimality is guaranteed for all the models.  For each of the 255 instances generated for the TR dataset with $n=20$, we have simulated the following scenarios for potential failures of the inter-hub edges of the backbone networks produced by the different models:
\begin{itemize}
\item {\bf Failure scenario 1 (FS1):}  Only activated inter-hub edges may fail. Each activated hub edge $\{k,l\}$ in a backbone network fails (and removed from the backbone network) according to a Bernoulli distribution with probability $p_{kl}$.
\item {\bf Failure scenario 2 (FS2):} In this scenario failures are associated with hub nodes. Failure of a hub node implies the failure of all the inter-hub edges incident to the hub. Thus, each activated hub fails with probability $p_{kk}$, and then all its incident inter-hub edges are removed from the backbone network.
\item {\bf Failure scenario 3 (FS3):} Failures are simulated for inter-hub edges of the backbone network similarly to FS1. In case an inter-hub edge fails, the failure probability of the loops at the extreme nodes of the edge is increased in $50\%$. Then, failures in the loop edges are simulated.
\item {\bf Failure scenario 4 (FS4):} First, failures are simulated for inter-hub edges of the backbone network similarly to FS1. The difference is that we now assume that the failure of a considerable number of inter-hub edges incident with any activated hub node, will provoke the failure of the hub node as well, and thus the failure of all its incident inter-hub edges. That is, for each activated hub node, we assume that if at least a given percentage $\gamma\%$ of its incident inter-hub edges have failed, then the whole hub node fails, provoking that its remaining incident inter-hub edges also fail, which are also removed from the network. In our study we fix the value of the parameter $\gamma$ to $75\%$, i.e., {if $75\%$ or more of the inter-hub edges incident to a hub node fails, then, the hub (and the remaining incident inter-hub edges) cannot be used any longer for routing the commodities.}
\end{itemize}

\begin{figure}
     \centering
         \includegraphics[width=0.5\textwidth]{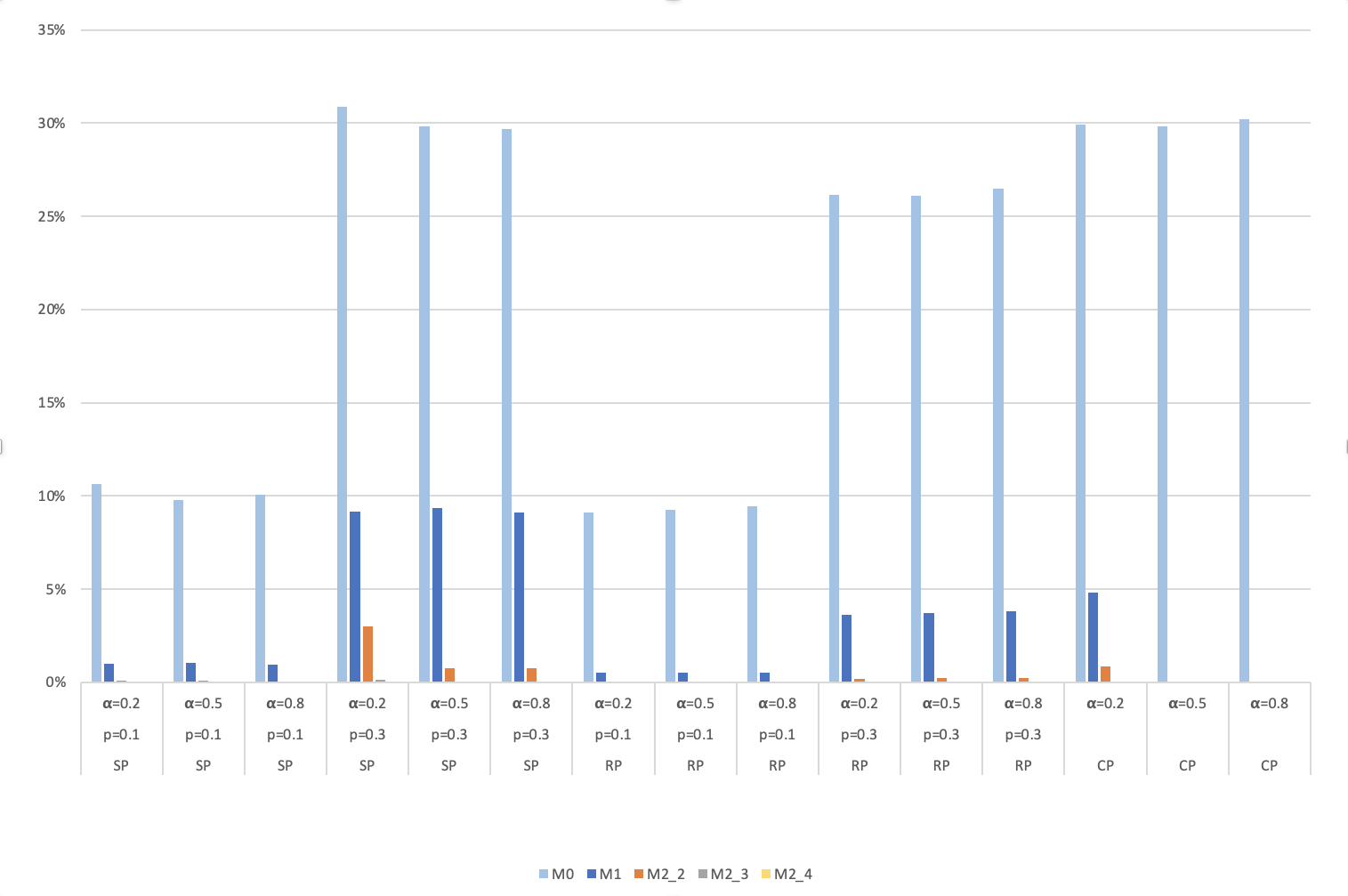}~\includegraphics[width=0.5\textwidth]{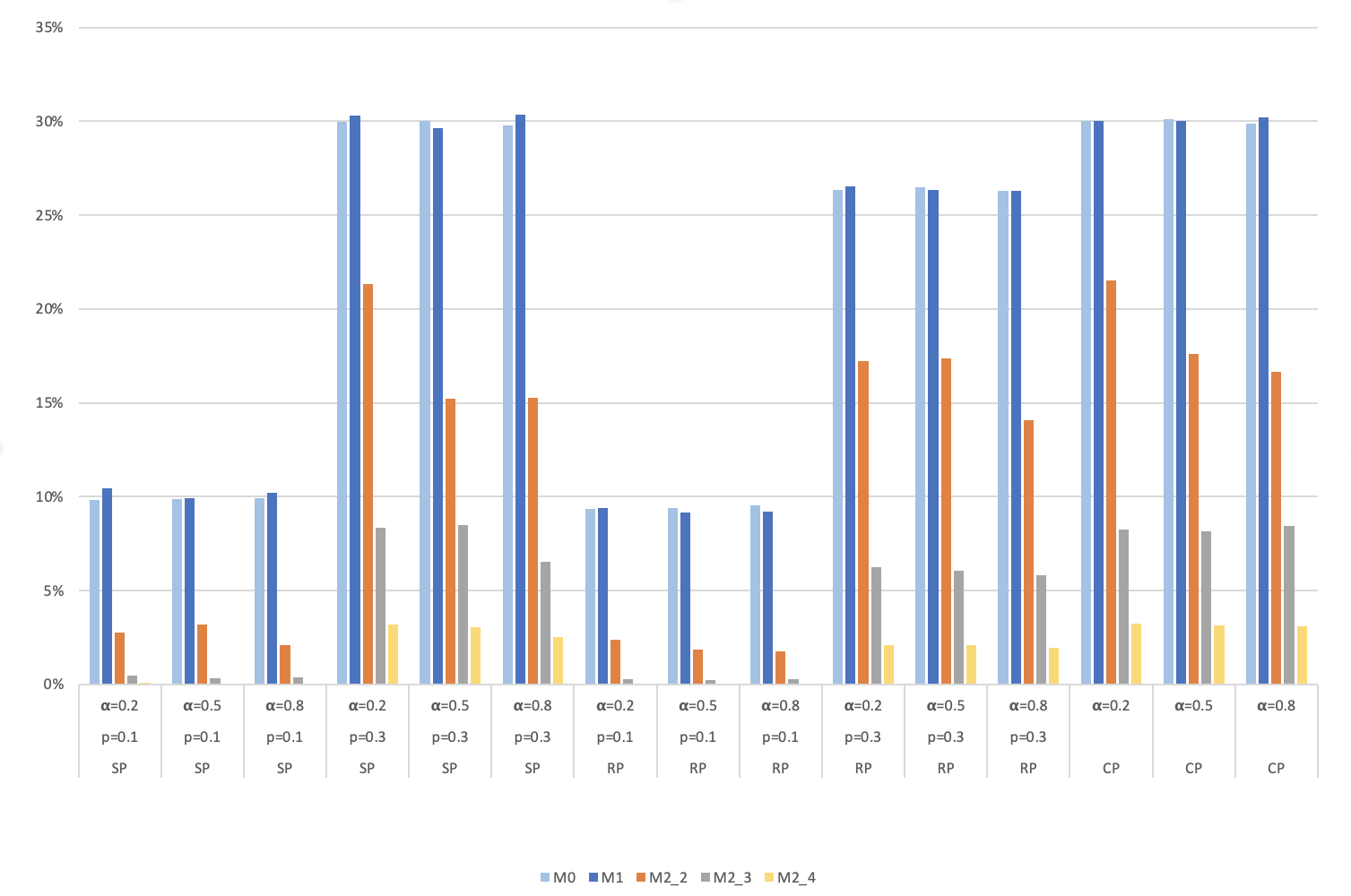}\\
         \includegraphics[width=0.5\textwidth]{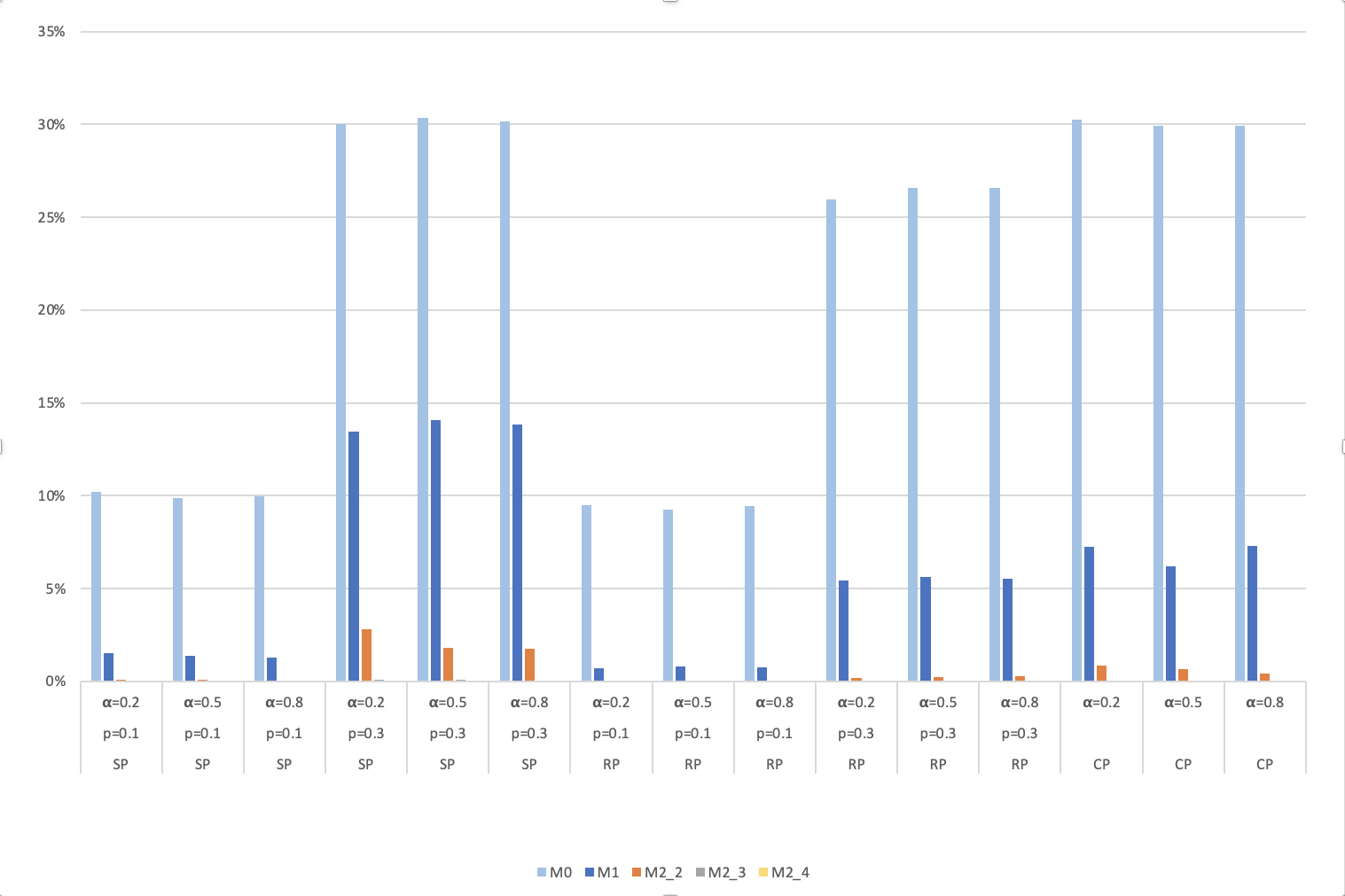}~\includegraphics[width=0.5\textwidth]{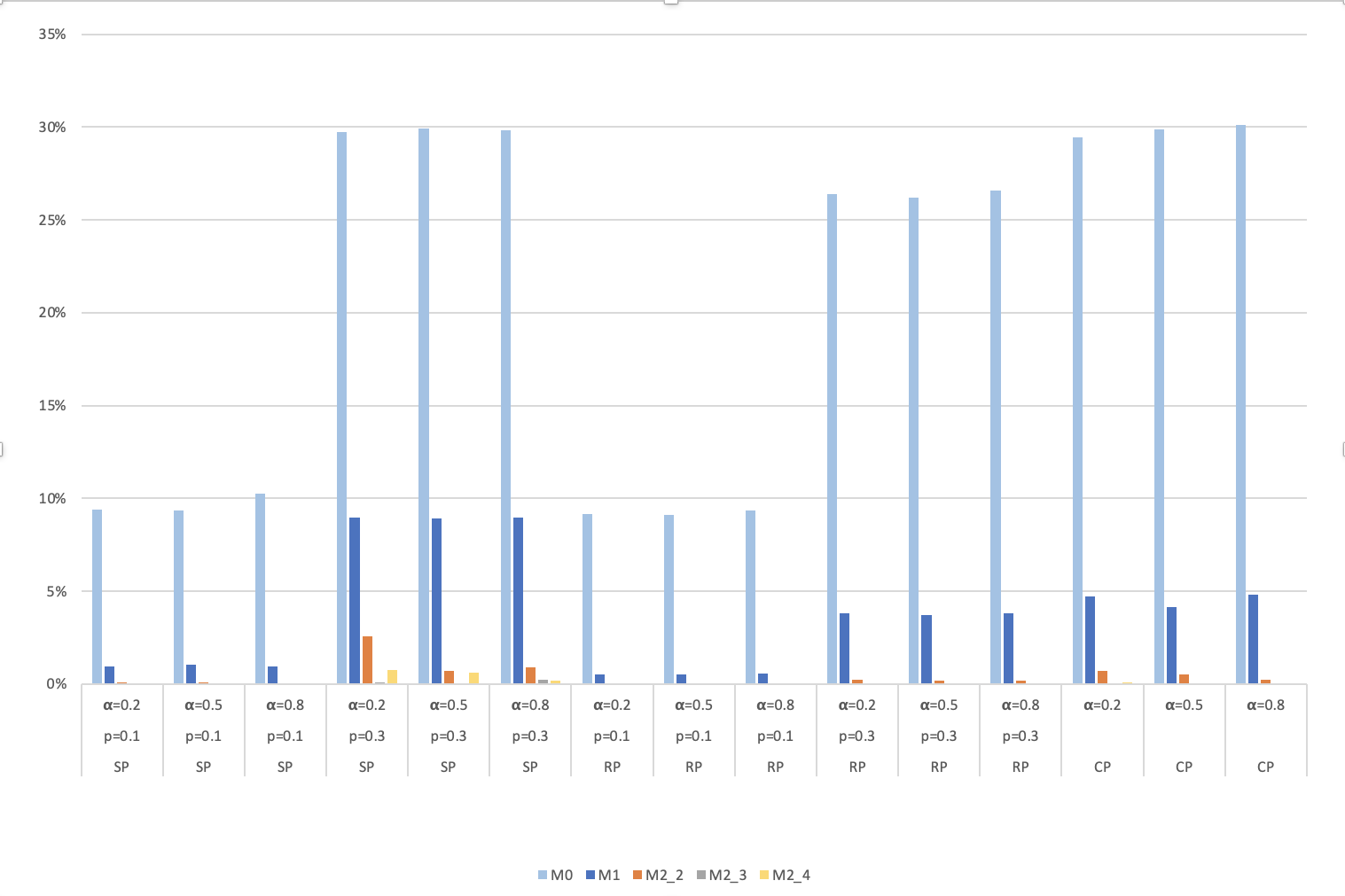}
        \caption{Average percentage of \emph{after-failure} networks for which commodities can no longer be routed (from top left to bottom right: FS1, FS2, FS3, FS4). {Light blue bars represent model $M0$, dark blue model $M1$, orange model $M2\_2$, gray model $M2\_3$ and yellow model $M2\_4$}.\label{fig:FS}}
        \end{figure}

Figure \ref{t:simulation} illustrates with simple backbone networks {($H,E_H$)} the four failure scenarios that we consider. The backbone network has four hub nodes {$H=\{k,l,m,q\}$} and six inter-hub edges, of which  two are loops  {$E_H=\{\{k,l\}$, $\{k,q\}, \{l,m\}, \{q,m\}, \{l,l\},\{m,m\}\}$}. In FS1, hub edges $\{k,l\}$ and  $\{m,m\}$ are chosen to fail, both depicted with dashed lines in the left picture. The network resulting after removing these inter-hub edges  is shown in the right picture. In FS2, hub node $m$ (in a gray circle) is chosen to fail, and then, all the hub edges incident with it (depicted with dashed lines), namely $\{q,m\}, \{m,m\}$ and $\{l,m\}$, removed from the network. In FS3 the interhub edge $\{q,m\}$ is chosen to fail, and increases in $50\%$ the failure probability of the loop $\{m,m\}$ ,which is then randomly chosen to fail. Thus, we chose $\{q,m\}$ and $\{m,m\}$ (depicted with dashed lines), which are then removed from the network. Finally, in FS4, hub edges $\{k,l\}$, $\{l,m\}$ and $\{m,m\}$ are chosen to fail. Then, since the percentage of inter-hub edges incident to $m$ that fail exceeds $\gamma=75\%$, all edges incident to $m$ are removed. For the remaining hub nodes, such a percentage is not exceeded so no further inter-hub edges are removed.
%\begin{figure}[H]
%\begin{center}
%\includegraphics{simulations}
%\caption{Illustration of the four different failure scenarios that we perform.\label{t:simulation}}
%\end{center}
%\end{figure}

We have carried out simulations for each of the above failure scenarios, all of which follow the same general structure, for a given backbone network.  {($i$)} We randomly generate the links that fail according to the corresponding failure scenario and obtain the \emph{after-failure} network by removing from the backbone network the edges that fail. ($ii$) We try to re-route all the commodities through the \emph{after-failure} network. ($iii$) Since it may happen that it is no longer possible to route some of the commodities in the \emph{after-failure} network, we will analyze this circumstance in our study. For each failure scenario, each simulation is repeated $10000$ times over each instance. The average results obtained for all the instances are reported in Figure \ref{fig:FS}. There we draw light blue bars to represent the results for model $M0$, dark blue for$M1$, orange for $M2\_2$, gray for$M2\_3$, and yellow for $M2\_4$.

%\begin{figure}
%     \centering
%         \includegraphics[width=0.5\textwidth]{FS1}~\includegraphics[width=0.5\textwidth]{FS2}\\
%         \includegraphics[width=0.5\textwidth]{FS3}~\includegraphics[width=0.5\textwidth]{FS4}
%        \caption{Average percentage of \emph{after-failure} networks for which commodities can no longer be routed (from top left to bottom right: FS1, FS2, FS3, FS4). {Light blue bars represent model $M0$, dark blue model $M1$, orange model $M2\_2$, gray model $M2\_3$ and yellow model $M2\_4$}.\label{fig:FS}}
%        \end{figure}

As one can observe, the networks obtained with the proposed models (M1 and M2\_$\lambda$) are clearly more robust under inter-hub failures than M0. Specifically, in average, our models allow re-routing all the involved commodities in more than $90\%$ of the failure occurrences while M0 was only able to re-route $78\%$ of them. The robustness of model M2\_4 is even more impressive, being the percentage of simulations in which re-routing is possible $99.5\%$.

On the other hand, analyzing the results of the failure scenario FS2, one can observe that models $M2\_\lambda$ are not only robust under inter-hub edges failures, but also under failures of the hub nodes.  However, $M0$ and $M1$ have a similar behaviour under these scenarios with close to $20\%$ of simulations, in average, in which the commodities could not be routed. {This highlights the performance of
$M2\_2$, $M2\_3$, and $M2\_4$, for which the percentage of simulations where some commodity could not be rerouted decreases to 12\%, 5\%, and 2\%, respectively.}

At each of the simulations, when all commodities can be routed in the \emph{after-failure} network, we compute the overall \emph{a posteriori} routing cost $R^F$. We denote by $\tau_F \in [0,1]$ the proportion of simulations for which this cost can be computed. In case a commodity $r \in R$ is not able to be routed through the \emph{after-failure} backbone network, we assume that its routing cost is proportional to the \emph{cost of the direct connection} $\bar c_{o(r) d(r)}$, i.e. the overall routing cost is $(1+q) \dsum_{r\in R} w_r c_{o(r) d(r)}$. {The parameter $q>0$ represents the \emph{extra percent cost} (over the cost of the direct connection) when re-routing a commodity in case the backbone network cannot be used any longer to route it. Such a cost may represent the outsourcing cost of a direct delivery between the origin and destination of the commodity or the lost of opportunity cost of a unsatisfied user for which the service could not be provided. With this information, we compute the average set-up and routing cost for the network as:
$$
\Phi(q) = \dsum_{h \in H} f_h + \dsum_{e\in E_H} h_e + \tau_F R^F+(1-\tau_F) (1+q) \dsum_{r\in R} w_r c_{o(r) d(r)}.
$$
\begin{figure}%[h]
\centering\includegraphics[scale=0.7]{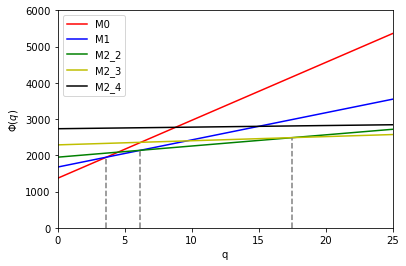}
\centering\caption{Routing costs + Set-up costs on the after-failure network as a function of the parameter $q$.\label{fig:qall}}
\end{figure}

We summarize in Figure \ref{fig:qall} the average behavior of this cost for all the simulations and all the failure scenarios. Each line represents the above cost, as a function of the parameter $q$, for each of the \emph{after-failure} networks produced by the simulations constructed with the five different models (M0, M1, M2\_2, M2\_3, and M2\_4).
One can observe that for small values of $q$ (the re-routing costs are a small factor of the direct costs from origin to destination) $M0$ is more convenient. This is clear, since in case the re-routing costs are not very high, one may undertake these costs, even when these failures occur very often. As $q$ increases, the most convenient models are M1, M2\_2, and M2\_3 (in this order). Model M2\_4 is clearly the most robust one, since the parameter $q$ almost does not affect the cost (in this case the percentage of simulations for which the commodities cannot be routed is tiny), but their set-up costs are very high.

\begin{figure}%[h]
\centering\includegraphics[width=0.5\textwidth]{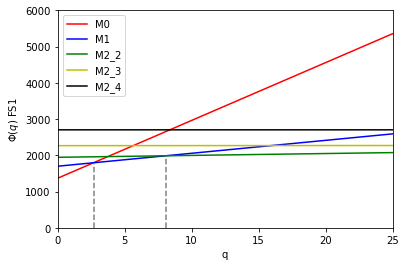}~\includegraphics[width=0.5\textwidth]{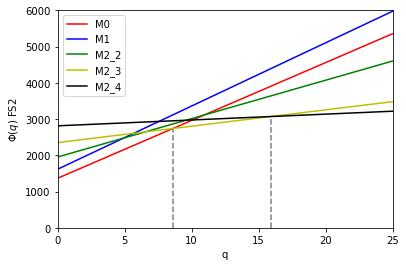}\\
\centering\includegraphics[width=0.5\textwidth]{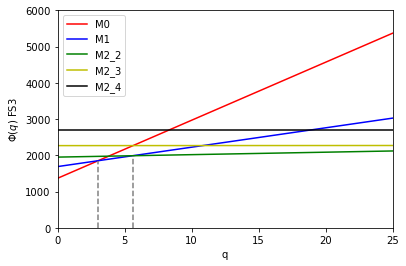}~\includegraphics[width=0.5\textwidth]{figq_3}\\
\centering\caption{Routing costs on the after-failure backbone network as a funciton of the parameter  $q$ for each failure scenario.\label{fig:q}}
\end{figure}

In Figure \ref{fig:q} we show the disaggregated results by failure scenario (FS1, FS2, FS3, and FS4). There, we observe that the behavior of $\Phi(q)$ is different for the failure scenario FS2, where we simulate failures in hub nodes. In FS2, models M0, M2\_3, and M2\_4 outperform M1 and M2\_2, in average, while in the remaining failure scenarios models M0, M1 and M2\_2 are more convenient for reasonable values of $q$.

%\begin{figure}%[h]
%\centering\includegraphics[scale=0.6]{figq_1}~\includegraphics[scale=0.6]{figq_2}\\
%\centering\includegraphics[scale=0.6]{figq_3}~\includegraphics[scale=0.6]{figq_3}\\
%\centering\caption{Routing costs on the after-failure backbone network as a funciton of the parameter  $q$ for each failure scenario.\label{fig:q}}
%\end{figure}

We conclude this section by highlighting that the study that we have carried out allows the decision maker to determine
 the best model to construct the backbone network based on the expected extra cost that should be paid for not providing the service to commodities due to failures in the network.

\section{Conclusions \label{sec:conclu}}

In this paper we propose different models to construct robust hub networks
under inter-hub links failures. The models that we develop ensure that an additional routing
path exists besides its original routing path for all the commodities. In the first model, an explicit backup path using at most an inter-hub edge is constructed to be used in case of failure of the original path from which each of the commodities is routed. The second model assures the existence of backup paths (using an arbitrary number of inter-hub links) in case of failure of the original inter-hub edges by means of imposing $\lambda$-connectivity of the backbone network for a given value of $\lambda\ge 2$. The two models present advantages from the point of view of the robustness of the hub network. One the one hand, the first model guarantees that backup paths for the commodities are of the same nature than the original non-failing network, although the computational difficulty to obtain solutions is high. On the other hand, the second model, although ensuring also the construction of backup paths, exhibits a lower computational load than the first model.

Both models have been computationally tested on an extensive battery of experiments with three hub location benchmarks, namely AP, CAB and TR. Some conclusions are derived from this study. Furthermore, we have analyzed the robustness of the model by simulating different types of failures on the TR network, concluding the applicability of our models.

Future research on the topic includes the study of valid inequalities for both models in order to alleviate the computational complexity of the exact resolution of the model. For larger instances, it would be helpful to design heuristic approaches that assure good quality solution in smaller computing times.

\section*{Acknowledgements}

The authors of this research acknowledge financial support by the Spanish Ministerio de Ciencia y Tecnolog\'ia, Agencia Estatal de Investigaci\'on and Fondos Europeos de Desarrollo Regional (FEDER) via projects PID2020-114594GB-C21 and MTM2019-105824GB-I00. The authors also acknowledge partial support from projects FEDER-US-1256951, Junta de Andaluc\'ia P18-FR-422, P18-FR-2369, B-FQM-322-UGR20 (COXMOS), and NetmeetData: Ayudas Fundaci\'on BBVA a equipos de investigaci\'on cient\'ifica
2019.

\bibliographystyle{apalike}

\bibliography{HLPF_Arxiv}

\end{document}